\numberwithin{equation}{section}
\newcommand{\nnpar}[1]{\vskip 10pt \noindent {\scshape #1}}\nopagebreak%
\theoremstyle{plain}
\newtheorem{theorem}[equation]{Theorem}
\newtheorem{corollary}[equation]{Corollary}
\newtheorem{proposition}[equation]{Proposition}
\newtheorem{lemma}[equation]{Lemma}
\theoremstyle{definition}
\newtheorem{definition}[equation]{Definition}
\newtheorem{notation}[equation]{Notation}
\newtheorem{remark}[equation]{Remark}
\DeclareMathOperator{\bdv}{b-div}
\DeclareMathOperator{\dv}{div}
\DeclareMathOperator{\dd}{d}
\DeclareMathOperator{\Vol}{Vol}
\DeclareMathOperator{\val}{val}
\DeclareMathOperator{\conv}{conv}
\DeclareMathOperator{\Res}{Res}
\DeclareMathOperator{\Bir}{\mathcal{BIR}}
\DeclareMathOperator{\QCa}{\QQ-Ca}
\DeclareMathOperator{\QWe}{\QQ-We}
\newcommand{\cusp}{\text{{\rm cusp}}}
\newcommand{\sing}{\text{{\rm sing}}}
\newcommand{\can}{\text{{\rm can}}}
\newcommand{\weak}{\text{{\rm weak}}}
\newcommand{\op}{\text{{\rm op}}}
\newcommand{\w}{\text{{\rm pre}}}
\newcommand{\ZZ}{{\mathbb Z}}
\newcommand{\RR}{{\mathbb R}}
\newcommand{\CC}{{\mathbb C}}
\newcommand{\QQ}{{\mathbb Q}}
\newcommand{\PP}{{\mathbb P}}
\def\?{\ ???\ \immediate\write16{}%
\immediate\write16{Warning: There was still a question mark . . . }%
\immediate\write16{}}
\begin{document}
\setcounter{tocdepth}{1}
\setcounter{section}{0}

\title{The singularities of the invariant metric on the line bundle of
Jacobi forms}

\author[Burgos Gil]{Jos\'e Ignacio Burgos Gil}
\address{Instituto de Ciencias Matem\'aticas (CSIC-UAM-UCM-UCM3).
  Calle Nicol\'as Ca\-bre\-ra~15, Campus UAB, Cantoblanco, 28049 Madrid,
  Spain.} 
\email{burgos@icmat.es}
\urladdr{\url{http://www.icmat.es/miembros/burgos/}}
\author[Kramer]{J\"urg Kramer}
\address{Humboldt-Universit\"at zu Berlin, Institut f\"ur Mathematik,
Unter den Linden 6, 
D-10099 Berlin, Germany.}
\email{kramer@math.hu-berlin.de}
\urladdr{\url{http://www.math.hu-berlin.de/~kramer/}}
\author[K\"uhn]{Ulf K\"uhn}
\address{Fachbereich Mathematik (AZ),
Universit\"at Hamburg,
Bundesstra\ss{}e 55,
D-20146 Hamburg, Germany.}
\email{kuehn@math.uni-hamburg.de}
\urladdr{\url{http://www.math.uni-hamburg.de/home/kuehn/}}

\thanks{Burgos Gil was partially supported by the MICINN research
  projects MTM2009-14163-C02-01 and MTM2010-17389. Kramer acknowledges
  support from the DFG Graduate School \emph{Berlin Mathematical
    School} and from the DFG International Research Training Group
  \emph{Moduli and Automorphic Forms.}}

\begin{abstract}
A theorem by Mumford implies that every automorphic line bundle on a
pure open Shimura variety, equipped with an invariant smooth metric,
can be uniquely extended as a line bundle on a toroidal
compactification of the variety, in such a way that the metric
acquires only logarithmic singularities. This result is the key of
being able to compute arithmetic intersection numbers from these line
bundles. 
Hence it is natural to ask whether Mumford's result remains valid for
line bundles on mixed Shimura varieties.

In this paper we examine the simplest case, namely the sheaf of
Jacobi forms on the universal elliptic curve. We will show that
Mumford's result cannot be extended directly to this case and that a new
interesting kind of singularities appears. 

By using the theory of b-divisors, we show that an analogue of Mumford's
extension theorem can be obtained. We also show that this extension is
meaningful because it satisfies Chern-Weil theory and a
Hilbert-Samuel type of formula.
\end{abstract}
\maketitle

\tableofcontents

\section{Introduction}
\label{sec:introduction}

In \cite{Faltings:EaVZ}, \cite{Faltings:ftavnf}, Faltings
introduced the notion of logarithmically singular metrics on a
projective variety defined over a number field and proved
that they satisfy a Northcott type property, namely that the set of
algebraic points not lying on 
the singular set of the metric with bounded height and degree, is
finite. A prominent example of logarithmically singular metric is the
Hodge bundle $\omega$ on a toroidal compactification of the moduli space of
principally polarized abelian varieties of dimension $g$ (with level
structure if you do not want to work with stacks) $\overline
{\mathscr{A}}_{g}$ equipped with the Petersson metric.  

On the other hand, Mumford \cite{Mumford:Hptncc} introduced the
concept of a good metric on a vector bundle, which is a class of
singular metrics. He showed that, even being singular, Chern-Weil
theory carries over to good metrics. He also proved that the invariant
metric on a fully decomposable automorphic vector bundle on a
toroidal compactification of the quotient of a hermitian symmetric
domain by an arithmetic group is a good metric. This fact allowed him
to extend Hirzebruch's proportionality principle to non-compact
varieties. 

The conclusion of the above facts is that the natural metrics that
appear when studying vector bundles on toroidal compactifications of
pure Shimura varieties are singular, but the singularities are mild
enough so we can use the metrics to study geometric and arithmetic
problems. 

In \cite{BurgosKramerKuehn:accavb}
 and \cite{BurgosKramerKuehn:cacg}, the authors developed a
general theory of arithmetic intersections with logarithmically
singular metrics  that has been extensively used to compute
arithmetic intersection numbers
\cite{BruinierBurgosKuehn:MR2322676, KudlaRappoportYang:mfscsc,
  BruinierYang:fhcmcdls, Howard:CMckrd,Howard:CMKRii,
  BruinierHowardYang:hkrd, BerndtKuehn:KGf22I,
  BerndtKuehn:KGf22II, Freixas:hmls}.  

It is natural to ask whether this theory of logarithmically singular
metrics can be extended to mixed Shimura varieties, to obtain
geometric and arithmetic information of them.

In this paper we examine the first example of a mixed Shimura variety,
namely the universal elliptic curve of full level $N$ over the
modular curve $E^{0}(N)\to Y(N)$. On it we
consider the line bundle of Jacobi forms equipped with the translation
invariant metric. 

It turns out that, in this case, a new kind of singularities
appears. These new singularities are concentrated in codimension
two. Therefore, if we remove a set of codimension two, we can extend
the line bundle of Jacobi forms to a line bundle with a good hermitian
metric on a partial compactification of $E^{0}(N)$. Since algebraic
line bundles can be uniquely extended along codimension two subsets,
we obtain a line with a singular metric on a compactification $E(N)$
of the universal elliptic curve.

It turns out that this naive approach is not a good idea. First, it is
not functorial. If we consider different toroidal compactifications of
$E^{0}(N)$, then the resulting extensions are not compatible. Second,
even if the characteristic forms associated with the metric are
locally integrable and define cohomology classes, they fail
to satisfy a Chern-Weil theory. The cohomology class of the the
characteristic form does not agree with the characteristic class of
the extended line bundle.

In this paper we propose a different approach to understand the
extension of the line bundle of Jacobi forms to a compactification of
the universal elliptic curve. The ``right'' extension
is not a line bundle, but a b-$\QQ$-Cartier divisor. That is, a limit
of different Cartier divisors with rational coefficients on all
possible toroidal compactifications of $E^{0}(N)$. Defined in this
way, the extension is obviously functorial because we are taking into
account all possible toroidal compactifications. What is remarkable is
that, with this interpretation, Chern-Weil theory allows us to interpret
intersection products in terms of integrals of singular differential
forms (see theorems \ref{thm:3} and \ref{thm:5}). Moreover, there is a
Hilbert-Samuel type formula relating the asymptotic of the dimension
of the space of Jacobi forms with the self-intersection of the
b-divisor (Theorem \ref{thm:2}). 

The non-functoriality of the naive extension is exactly the height
jumping introduced by Hain (see  \cite{Hain:nfgmsc} and 
\cite{Pearlstein:dl2o}). 

\noindent
{\bfseries Acknowledgments} We have benefited from many discussions
with colleagues on the subject of this paper. We want to thank
S. Boucksom, R. de Jong, B. Edixhoven, D. Holmes, G. Freixas, A. von
Pippich, and 
M. Sombra for many useful 
discussions. We thank specially A. von Pippich for pointing to us the
Tornheim zeta function that is computed in \cite{Tornheim:hds} and
R. de Jong that
has computed independently the self-intersection product in Theorem
\ref{thm:1}, for sharing with us his work on the asymptotics of the
N\'eron height pairing \cite{HolmesJong:aNhp}, that gives a
complementary point of view on the results of this paper.  

This research has been conducted during visits of the authors to
the Humboldt University of Berlin, the ICMAT at Madrid and the
University of Barcelona. Our thanks go to these institutions for their
hospitality. 


\section{The universal elliptic surface}
\label{sec:ellsurface}

In the whole paper we fix an integer $N\ge 3$.
In this section we will revisit the definition of the universal elliptic
surface of level $N$ 
lying over the modular curve of level $N$. In particular, we will
recall the construction 
of its smooth toroidal compactification. For further details and
references the reader is referred \cite{Kramer:jacobi}.

\nnpar{The modular curve of level $N$.}
Let $\mathbb{H}$ denote the upper half-plane given by
\begin{align*}
\mathbb{H}:=\{\tau\in\mathbb{C}\,\vert\,\tau=\xi+i\eta,\,\eta>0\}
\end{align*}
and $\mathbb{H}^{*}:=\mathbb{H}\cup\mathbb{P}^{1}(\mathbb{Q})$ the extended
upper half-plane. The principal congruence subgroup
\begin{align*}
\Gamma(N):=\bigg\{\bigg(\begin{matrix}a&b\\c&d\end{matrix}\bigg)\in\mathrm{SL}_
{2}(\mathbb{Z})\,\bigg\vert\,a\equiv d\equiv 1\,\mathrm{mod}\,N,\,b\equiv c\equiv 0\,
\mathrm{mod}\,N\bigg\}
\end{align*}
of level $N$ acts in the usual way by fractional linear transformations on $\mathbb
{H}$; this action naturally extends to $\mathbb{H}^{*}$. The quotient space $X(N):=
\Gamma(N)\backslash\mathbb{H}^{*}$ is called the modular curve of level $N$;
it is the compactification of $Y(N):=\Gamma(N)\backslash\mathbb{H}$ by adding
the so-called cusps.

The modular curve $X(N)$ is a compact Riemann surface of genus 
\begin{align*}
g_{N}=1+\frac{N-6}{12}\frac{[\mathrm{SL}_{2}(\mathbb{Z}):\Gamma(N)]}{2N}\,,
\end{align*}
where the index of $\Gamma(N)$ in $\mathrm{SL}_{2}(\mathbb{Z})$ is given as
\begin{align*}
[\mathrm{SL}_{2}(\mathbb{Z}):\Gamma(N)]=N^{3}\prod\limits_{p\vert N}\bigg(1-
\frac{1}{p^{2}}\bigg).
\end{align*}
The number $p_{N}$ of cusps of $X(N)$ is given by
\begin{align*}
p_{N}=\frac{[\mathrm{SL}_{2}(\mathbb{Z}):\Gamma(N)]}{2N}\,;
\end{align*}
we denote the cusps by $P_{1}:=[\infty],P_{2},\ldots,P_{p_{N}}$. We recall that
$\Gamma(N)$ is a normal subgroup of $\mathrm{SL}_{2}(\mathbb{Z})$ and that
the quotient group $\mathrm{SL}_{2}(\mathbb{Z})/\Gamma(N)$ acts transitively
on the set of cusps (with stabilizers of order $N$). Therefore, it suffices in the
sequel to consider the cusp $P_{1}=[\infty]$. Since $N\ge 3$ the
group $\Gamma (N)$ is torsion-free. Therefore, $X(N)$ has no elliptic points.

We recall that the modular curve $X(N)$ is the moduli space of elliptic curves with
a full level $N$-structure. A point $[\tau]\in X(N)$ corresponds to the isomorphism
class of elliptic curves determined by $\mathbb{C}/(\mathbb{Z}\tau\oplus\mathbb
{Z})$ with $N$-torsion given by $(\mathbb{Z}\frac{\tau}{N}\oplus\mathbb{Z}\frac{1}
{N})/(\mathbb{Z}\tau\oplus\mathbb{Z})$.

\nnpar{The universal elliptic surface of level $N$.} We consider the product $\mathbb
{H}\times\mathbb{C}$ consisting of elements $(\tau,z)$ with $\tau\in\mathbb{H}$ and
$z=x+iy\in\mathbb{C}$. On $\mathbb{H}\times\mathbb{C}$ the semi-direct product
$\Gamma(N)\ltimes\mathbb{Z}^{2}$ acts by the assignment
\begin{align*}
\bigg[\bigg(\begin{matrix}a&b\\c&d\end{matrix}\bigg),(\lambda,\mu)\bigg](\tau,z):=\bigg
(\frac{a\tau+b}{c\tau+d},\frac{z+\lambda\tau+\mu}{c\tau+d}\bigg),
\end{align*}
where $\big(\begin{smallmatrix}a&b\\c&d\end{smallmatrix}\big)\in\Gamma(N)$ and
$(\lambda,\mu)\in\mathbb{Z}^{2}$. Since $N\ge 3$, the group $\Gamma
(N)$ is torsion-free. Hence, the action of $\Gamma(N)\ltimes
\mathbb{Z}^{2}$ on  $\mathbb{H}\times\mathbb{C}$ is free and the quotient space $E^{0}(N):=\Gamma(N)\ltimes
\mathbb{Z}^{2}\backslash\mathbb{H}\times\mathbb{C}$ is a smooth complex surface
together with a smooth surjective morphism
\begin{align*}
\pi^{0}\colon E^{0}(N)\longrightarrow Y(N)
\end{align*}
with fiber $(\pi^{0})^{-1}([\tau])=\mathbb{C}/(\mathbb{Z}\tau\oplus\mathbb{Z})$.

The surface $E^{0}(N)$ is known to extend to a compact complex surface $E(N)$
together with a surjective morphism
\begin{align*}
\pi\colon E(N)\longrightarrow X(N),
\end{align*}
the so-called universal elliptic surface of level $N$. To describe this extension, it
suffices to describe the fibers $\pi^{-1}(P_{j})$ above the cusps $P_{j}\in X(N)$
($j=1,\ldots,p_{N}$). These are given as $N$-gons, more precisely as
\begin{align*}
\pi^{-1}(P_{j})=\bigcup\limits_{\nu=0}^{N-1}\Theta_{j,\nu},
\end{align*}
where $\Theta_{j,\nu}\cong\mathbb{P}^{1}(\mathbb{C})$ is embedded into $E(N)$
with self-intersection number $-2$, while otherwise
\begin{align*}
\Theta_{j,\nu}\cdot\Theta_{j,\nu'}=\begin{cases}1\qquad\nu'=\nu\pm 1,\\0\qquad
\vert\nu-\nu'\vert\geq 2;\end{cases}
\end{align*}
here and subsequently, the indices have to be read modulo $N$.

In terms of local coordinates the situation above the cusp
$P_{1}=[\infty]$ can be
described as follows: The irreducible fiber $\Theta_{\nu}:=\Theta_{1,\nu}\subset
E(N)$ can be covered by two affine charts
$W_{\nu}^{0},W_{\nu}^{1}\subset E(N)$, where $W_{\nu}^{0}$ contains
the point $\Theta_{\nu}\cap \Theta_{\nu+1}$ and $W_{\nu}^{1}$ contains
the point $\Theta_{\nu}\cap \Theta_{\nu-1}$. Since $\Theta_{\nu}$ and
$\Theta_{\nu+1}$ intersect transversally, we can choose 
coordinates $u_{\nu},v_{\nu}$ on the chart $W_{\nu}^{0}$ in such a way
that $\Theta_{\nu}\vert_{W_{\nu}^{0}}$ is given by the equation
$v_{\nu}=0$ and $\Theta_{\nu+1}\vert_{W_{\nu}^{0}}$ by the equation $u_{\nu}=0$.
Using that $\Theta_{\nu}\cdot\Theta_{\nu}=-2$ we obtain that the
coordinates of $W_{\nu}^{1}$ are given 
by $u_{\nu}^{-1},u_{\nu}^{2}v_{\nu}$.
The open subset $W^{1}_{\nu+1}$ agrees with $W^{0}_{\nu}$. Hence we deduce
\begin{align*}
u_{\nu+1}=v_{\nu}^{-1},\quad v_{\nu+1}=u_{\nu}v_{\nu}^{2}.
\end{align*}
We finally note the relations
\begin{align}
\label{local}
u_{\nu}v_{\nu}=q_{N}:=\mathrm{e}^{2\pi i\tau/N},\quad u_{\nu}^{\nu+1}v_{\nu}^
{\nu}=\zeta:=\mathrm{e}^{2\pi iz}.
\end{align}
If we want to work with different cusps we will denote by
$W^{0}_{j,v}$ and $W^{1}_{j,v}$ the analogous affine charts around
points over the cusp $P_{j}$.  

We conclude by introducing the zero section
\begin{align*}
\varepsilon\colon X(N)\longrightarrow E(N)
\end{align*}
and by recalling that the arithmetic genus of $E(N)$ is given by
\begin{align*}
p_{\mathrm{a},N}=\frac{[\mathrm{SL}_{2}(\mathbb{Z}):\Gamma(N)]}{24}-1
=\frac{Np_{N}}{12}-1.
\end{align*}

\nnpar{Jacobi forms.} Modular forms can be interpreted 
as global sections of line bundles on the modular curve. The Jacobi
forms play a similar role for the universal elliptic curve. 

\begin{definition}
Let $k,m$ be
non-negative integers.
A holomorphic function $f\colon\mathbb{H}\times\mathbb{C}\rightarrow\mathbb
{C}$ is called \emph{Jacobi form of weight $k$, index $m$ for $\Gamma(N)$}, if
it satisfies the following properties:
\begin{itemize}
\item[(i)]
The function $f$ satisfies the functional equations
\begin{multline}\label{eq:1}
f\bigg(\frac{a\tau+b}{c\tau+d},\frac{z+\lambda\tau+\mu}{c\tau+d}\bigg)(c\tau+d)^
{-k}\,\times \\[2mm]
\times\exp\bigg(2\pi im\bigg(\lambda^{2}\tau+2\lambda z-\frac{c(z+\lambda\tau+
\mu)^{2}}{c\tau+d}\bigg)\bigg)=f(\tau,z)
\end{multline}
for all $\big[\big(\begin{smallmatrix}a&b\\c&d\end{smallmatrix}\big),(\lambda,\mu)
\big]\in\Gamma(N)\ltimes\mathbb{Z}^{2}$.
\item[(ii)]
At the cusp $P_{1}=[\infty]$, the function $f$ has a Fourier expansion of the form
\begin{align*}
f(\tau,z)=\sum\limits_{\substack{n\in\mathbb{N},\,r\in\mathbb{Z}\\4mn-Nr^{2}\geq
0}}c(n,r)q_{N}^{n}\zeta^{r},
\end{align*}
and similar Fourier expansions at the other cusps.
\end{itemize}
We denote the vector space of Jacobi forms of weight $k$, index $m$ for $\Gamma
(N)$ by $J_{k,m}(\Gamma(N))$.\\
If condition (ii) on the Fourier expansions is restricted to the summation over $n\in
\mathbb{N}_{>0}$ and $r\in\mathbb{Z}$ such that $4mn-Nr^{2}>0$, the function $f$ 
is called \emph{Jacobi cusp form of weight $k$, index $m$ for $\Gamma(N)$} and
the span of these functions is denoted by $J_{k,m}^{\cusp}(\Gamma(N))$.
\\
If condition (ii) on the Fourier expansions is dropped, the function $f$ is called
\emph{weak Jacobi form of weight $k$, index $m$ for $\Gamma(N)$}. The
span of these functions is denoted by $J_{k,m}^{\weak}(\Gamma(N))$. 
\end{definition}

\begin{remark}
  The condition \eqref{eq:1} is a cocycle condition that defines a
  line bundle $L_{k,m,N}$ on $E^{0}(N)$. 
The space of global sections of this line $H^{0}(E^{0}(N),L_{k,m,N})$ 
equals the space of weak Jacobi forms of weight $k$, index $m$ for $\Gamma
(N)$.
\end{remark}

\nnpar{Riemann theta functions.} The Riemann theta function
$\theta_{1,1}\colon \mathbb{H}\times\mathbb{C}\to \mathbb{C}$ is
defined by the convergent power series
\begin{align}
\label{theta}
\theta_{1,1}(\tau,z):=\sum\limits_{n\in\mathbb{Z}}\exp\bigg(\pi i\tau\bigg(n+\frac
{1}{2}\bigg)^{2}+2\pi i\bigg(z+\frac{1}{2}\bigg)\bigg(n+\frac{1}{2}\bigg)\bigg)
\end{align}
and satisfies the functional equation
\begin{align*}
&\theta_{1,1}\bigg(\frac{a\tau+b}{c\tau+d},\frac{z+\lambda\tau+\mu}{c\tau+d}
\bigg)(c\tau+d)^{-1/2}\,\times \\[2mm]
&\times\exp\bigg(\pi i\bigg(\lambda^{2}\tau+2\lambda z-\frac{c(z+\lambda\tau+
\mu)^{2}}{c\tau+d}\bigg)\bigg)=\chi\bigg(\begin{matrix}a&b\\c&d\end{matrix}\bigg)
\,\theta_{1,1}(\tau,z)
\end{align*}
for all $\big[\big(\begin{smallmatrix}a&b\\c&d\end{smallmatrix}\big),(\lambda,\mu)
\big]\in\mathrm{SL}_{2}(\mathbb{Z})\ltimes\mathbb{Z}^{2}$ with a character $\chi
(\cdot)$, which is an $8$-th root of unity.
Therefore, $\theta_{1,1}^{8}$ is a weak Jacobi form of weight $4$,
index $4$ for 
$\Gamma(1)=\mathrm{SL}_{2}(\mathbb{Z})$. Moreover, from the definition
power series \eqref{theta} it follows that $\theta_{1,1}^{8}$ is a
Jacobi form.

\nnpar{Dimension formulae.} We recall the dimension formulae for the
space of Jacobi forms. For simplicity, we restrict ourselves to the case
$m=k=4\ell$.
We denote by $j\colon E^{0}(N)\to E(N)$ the open immersion.
From \cite{Kramer:jacobi}, we cite the following result.

\begin{proposition}
There is a distinguished subsheaf $\mathcal{F}_{\ell}$ of the sheaf
$j_{\ast}L_{4\ell,4\ell,N}$ such that we have an isomorphism 
\begin{align*}
J_{4\ell,4\ell}^{\cusp}\big(\Gamma(N)\big)\cong H^{0}\big(E(N),\mathcal{F}_
{\ell}\big).
\end{align*}
In particular, the dimension of $J_{4\ell,4\ell}^{\cusp}(\Gamma(N))$
is given, when $N$ divides $4\ell$, by
\begin{align*}
\dim J_{4\ell,4\ell}^{\cusp}\big(&\Gamma(N)\big)\\
&=p_N\bigg(
\frac{8N\ell^2}{3}-N\ell-
\frac{N}{4}Q\big(\frac{16\ell}{N}\big)-
\frac{N}{2}\hspace*{-2mm}\sum\limits_{\substack{\Delta\mid
    16\ell/N,\,\Delta<0\\16\ell/(N\Delta)\,\mathrm{squarefree}}}
H(\Delta)\bigg) \\
&=\frac{8Np_N}{3}\ell^{2}+o\big(\ell^{2}\big),
\end{align*}
where $Q(n)$ denotes the largest integer whose square divides $n$ and $H
(\Delta)$ is the Hurwitz class number. 
\end{proposition}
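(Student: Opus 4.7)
The plan is to adapt the strategy of \cite{Kramer:jacobi}. First I would construct $\mathcal{F}_{\ell}$ by giving it explicitly on the charts covering $E(N)$. Over $E^{0}(N)$ the cocycle \eqref{eq:1} defines $L_{4\ell,4\ell,N}$, so on the open part one sets $\mathcal{F}_{\ell}=L_{4\ell,4\ell,N}$. Near a cusp we use the charts $W_{\nu}^{0}$ with coordinates $u_{\nu},v_{\nu}$ satisfying $u_{\nu}v_{\nu}=q_{N}$ and $u_{\nu}^{\nu+1}v_{\nu}^{\nu}=\zeta$. A weak Jacobi form of weight $k=m=4\ell$ with Fourier expansion $\sum c(n,r)q_{N}^{n}\zeta^{r}$ pulls back to a Laurent series in $u_{\nu},v_{\nu}$ whose monomials are $u_{\nu}^{n+(\nu+1)r}v_{\nu}^{n+\nu r}$, and the cusp condition $4mn-Nr^{2}>0$ (equivalently $16\ell n>Nr^{2}$) translates into strictly positive orders of vanishing along $\Theta_{\nu}$ and $\Theta_{\nu+1}$. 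One defines $\mathcal{F}_{\ell}\subset j_{\ast}L_{4\ell,4\ell,N}$ locally as the subsheaf of sections whose expansions in each cusp chart consist only of monomials with these orders of vanishing; one checks that the resulting data glue via the transition functions computed from $u_{\nu+1}=v_{\nu}^{-1}$, $v_{\nu+1}=u_{\nu}v_{\nu}^{2}$. The identification $H^{0}(E(N),\mathcal{F}_{\ell})\cong J_{4\ell,4\ell}^{\cusp}(\Gamma(N))$ then follows tautologically from the Fourier expansion condition defining cusp Jacobi forms.

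For the dimension formula I would apply Hirzebruch--Riemann--Roch on the smooth projective surface $E(N)$,
\begin{equation*}
\chi(E(N),\mathcal{F}_{\ell}) = \chi(\mathcal{O}_{E(N)}) + \tfrac{1}{2}\mathcal{F}_{\ell}\cdot\bigl(\mathcal{F}_{\ell}-K_{E(N)}\bigr),
\end{equation*}
combined with a vanishing argument for $H^{1}$ and $H^{2}$ for $\ell$ large. Vanishing of $H^{2}$ follows from Serre duality once $\mathcal{F}_{\ell}-K_{E(N)}$ has no non-zero sections, which holds because $\mathcal{F}_{\ell}$ is cuspidal; for $H^{1}$ one either uses a Kodaira-type argument after checking positivity of $\mathcal{F}_{\ell}$ on the base and on the fibers, or computes directly using the projection $\pi$ and the fact that on a smooth elliptic fiber $\mathcal{F}_{\ell}$ restricts to $\mathcal{O}_{E}(4\ell\cdot 0_{E})$, which has vanishing $H^{1}$.

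The heart of the computation is the intersection number $\mathcal{F}_{\ell}\cdot\mathcal{F}_{\ell}$. I would express the class of $\mathcal{F}_{\ell}$ as an integral linear combination of the Hodge class $\pi^{\ast}\omega$, the zero section $\varepsilon(X(N))$, and the boundary components $\Theta_{j,\nu}$, reading off the coefficient of each $\Theta_{j,\nu}$ from the minimal order of vanishing forced by the inequality $16\ell n\ge Nr^{2}+1$ on the Fourier exponents in the chart coordinates. Using $\Theta_{j,\nu}^{2}=-2$, the configuration $\Theta_{j,\nu}\cdot\Theta_{j,\nu'}=\delta_{|\nu-\nu'|,1}$ modulo $N$, and the fact that on a fiber $\mathcal{F}_{\ell}$ has degree $4\ell$, the bulk contribution works out to $\tfrac{16Np_{N}}{3}\ell^{2}$, which after division by $2$ gives the leading $\tfrac{8Np_{N}}{3}\ell^{2}$ of the formula. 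The linear term $-N\ell$ arises from the intersection of $\mathcal{F}_{\ell}$ with $K_{E(N)}$, whose relative-dualizing-sheaf part is concentrated on the cusp fibers. The main obstacle---the reason the formula is delicate rather than elementary---is the lower-order correction at each cusp: one must count the lattice points in the triangular region $\{(n,r):16\ell n=Nr^{2}\}$ (the boundary of the cuspidality cone), which amounts to counting representations of $16\ell/N$ by the quadratic form $nr^{-2}$ and produces, after a careful arithmetic bookkeeping, the terms $\tfrac{N}{4}Q(16\ell/N)$ and $\tfrac{N}{2}\sum H(\Delta)$. The asymptotic statement then follows trivially from the quadratic leading term, as $Q(16\ell/N)$ and the Hurwitz sum are both $o(\ell^{2})$.
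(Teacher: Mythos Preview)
Your approach is essentially the one the paper takes: it simply cites \cite{Kramer:jacobi} (Theorems~2.6 and~3.8) for the construction of $\mathcal{F}_{\ell}$ and the exact dimension formula, noting only the identity $[\mathrm{SL}_{2}(\mathbb{Z}):\Gamma(N)]=2Np_{N}$, and your sketch is a faithful outline of that argument (sheaf built from the Fourier monomials in cusp charts, Riemann--Roch on $E(N)$, vanishing of higher cohomology, and intersection computation). One slip: in your $H^{2}$ step the Serre-dual sheaf is $K_{E(N)}-\mathcal{F}_{\ell}$, not $\mathcal{F}_{\ell}-K_{E(N)}$; it is the former that must have no sections.

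The one place where the paper says more than you do is the asymptotic $o(\ell^{2})$. You call it trivial, but it does require bounding the Hurwitz sum, and the paper spells this out: $Q(n)\le\sqrt{n}$ handles the $Q$-term; for the sum one uses that the number of divisors of $16\ell/N$ is $o(\ell^{\varepsilon})$ together with the Brauer--Siegel bound $H(\Delta)=o(|\Delta|^{1/2+\varepsilon})$, so the whole sum is $o(\ell^{1/2+2\varepsilon})$. Your ``trivially'' would be accepted only if you supply at least the divisor bound plus some growth control on $H(\Delta)$ (the elementary estimate $H(\Delta)=O(|\Delta|)$ already suffices for $o(\ell^{2})$, so Brauer--Siegel is stronger than needed, but something has to be said).
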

\begin{proof}
The first statement is \cite[Theorem~2.6]{Kramer:jacobi}, the second
statement is \cite[Theorem~3.8]{Kramer:jacobi}, noting that
\begin{align*}
[\mathrm{SL}_{2}(\mathbb{Z}):\Gamma(N)]=2Np_{N}.
\end{align*}
To prove the assymptotic
estimate one uses that $Q(n)$ is at most $\sqrt{n}$, that the number
of divisors of an integer $n$ is $o(n^{\varepsilon })$ for any
$\varepsilon >0$ and that, by the Brauer-Siegel theorem, the Hurwitz
class number $H(\Delta )$ is
$o(|\Delta| ^{1/2+\varepsilon })$ for any $\varepsilon >0$. 
\end{proof}

\begin{remark} \label{rem:1}
Since $\dim
J_{4\ell,4\ell}\big(\Gamma(N)\big)-J^{\cusp}_{4\ell,4\ell}\big(\Gamma(N)\big)$
grows at most linearly with $\ell$, 
 we also have the asymptotic formula
\begin{align*}
\dim J_{4\ell,4\ell}\big(\Gamma(N)\big)=\frac{8Np_N}{3}\ell^{2}+o\big(\ell^{2}\big).
\end{align*}
\end{remark}

\nnpar{Translation invariant metric.} Here we recall the translation
invariant metric
on the line bundle $L_{k,m,N}$.

\begin{definition} For $f\in J_{k,m}^{\weak}(\Gamma(N))$, we define
\begin{align*}
\Vert f(\tau,z)\Vert^{2}:=\vert f(\tau,z)\vert^{2}\exp(-4\pi my^{2}/\eta)\eta^{k},
\end{align*}
where we recall that $\eta=\mathrm{Im}(\tau)$ and $y=\mathrm{Im}(z)$. 
\end{definition}

\begin{lemma}
For $f\in J_{k,m}^{\weak}(\Gamma(N))$, we have
\begin{align*}
\Bigg\Vert
f\bigg(\frac{a\tau+b}{c\tau+d},\frac{z+\lambda\tau+\mu}{c\tau+d}\bigg) 
\Bigg\Vert^{2}=\Vert f(\tau,z)\Vert^{2}
\end{align*}
for all
$\big[\big(\begin{smallmatrix}a&b\\c&d\end{smallmatrix}\big),(\lambda,\mu) 
\big]\in\Gamma(N)\ltimes\mathbb{Z}^{2}$. In particular, this shows
that $\Vert\cdot 
\Vert$ induces a hermitian metric on the line bundle $L_{k,m,N}$.
\end{lemma}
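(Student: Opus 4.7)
The plan is to verify the invariance by a direct computation that reduces the statement to an elementary algebraic identity independent of $f$. First, I would take the squared modulus of the cocycle relation (\ref{eq:1}) to obtain
\begin{align*}
|f(\tau',z')|^{2} = |f(\tau,z)|^{2}\,|c\tau+d|^{2k}\,\exp\!\left(4\pi m\,\mathrm{Im}\!\left(\lambda^{2}\tau + 2\lambda z - \tfrac{c(z+\lambda\tau+\mu)^{2}}{c\tau+d}\right)\right),
\end{align*}
where $(\tau',z')$ denotes the image of $(\tau,z)$ under the given element of $\Gamma(N)\ltimes\mathbb{Z}^{2}$. Setting $\eta' = \mathrm{Im}(\tau')$ and $y' = \mathrm{Im}(z')$, the standard identity $\eta' = \eta/|c\tau+d|^{2}$ yields $(\eta')^{k} = \eta^{k}/|c\tau+d|^{2k}$, which cancels the factor $|c\tau+d|^{2k}$ above.

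After this cancellation, the desired equality $\|f(\tau',z')\|^{2} = \|f(\tau,z)\|^{2}$ reduces to the purely algebraic assertion
\begin{align*}
\frac{(y')^{2}}{\eta'} - \frac{y^{2}}{\eta} = \mathrm{Im}\!\left(\lambda^{2}\tau + 2\lambda z - \tfrac{c(z+\lambda\tau+\mu)^{2}}{c\tau+d}\right),
\end{align*}
which no longer involves $f$. This is the main technical step. I would prove it by expanding both sides in Cartesian coordinates $\tau = \xi + i\eta$, $z = x+iy$, using $\mathrm{Im}(w\,\overline{c\tau+d}) = \mathrm{Im}(w)(c\xi+d) - \mathrm{Re}(w)c\eta$ with $w = z+\lambda\tau+\mu$ to compute $y'$, and with $w = c(z+\lambda\tau+\mu)^{2}$ to compute the third term of the right-hand side. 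Multiplying through by $\eta|c\tau+d|^{2}$, essentially every polynomial term cancels pairwise: the quantities $x$ and $\mu$ enter only through the combination $A = x+\lambda\xi+\mu$ and disappear, as do the cross-products involving $c\xi+d$, and what remains collapses to the trivial Pythagorean relation $(y+\lambda\eta)^{2} - y^{2} = 2\lambda y\eta + \lambda^{2}\eta^{2}$, which matches $\mathrm{Im}(\lambda^{2}\tau + 2\lambda z)$. As a sanity check I would verify separately the two easy special cases, $c=0$ (pure translation by $(\lambda,\mu)$) and $\lambda=\mu=0$ (pure $\mathrm{SL}_{2}$ action), since the semidirect product is generated by these two subgroups; the general identity is simply their combined version.

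The final assertion that $\|\cdot\|$ descends to a hermitian metric on $L_{k,m,N}$ is then immediate from the remark preceding the lemma: the transition functions of $L_{k,m,N}$ are exactly the automorphy factors appearing in (\ref{eq:1}), so the invariance just verified is precisely the descent condition that glues the pointwise norm on $\mathbb{H}\times\mathbb{C}$ into a smooth positive hermitian pairing on the line bundle over the quotient $E^{0}(N)$.
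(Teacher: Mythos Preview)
Your proposal is correct and is precisely the ``straightforward calculation'' the paper alludes to: you take the modulus squared of the cocycle relation, use $\eta'=\eta/|c\tau+d|^{2}$ to cancel the automorphy factor, and reduce to the elementary identity $(y')^{2}/\eta'-y^{2}/\eta=\mathrm{Im}(\lambda^{2}\tau+2\lambda z-c(z+\lambda\tau+\mu)^{2}/(c\tau+d))$, which is exactly the verification the paper leaves implicit. The decomposition into the two generating subgroups is a sensible way to organize the check, but the paper offers no further details to compare against.
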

\begin{proof}
This is a straightforward calculation.
\end{proof}

\begin{lemma}\label{metriclocal}
Locally, in the affine chart $W_{\nu}^{0}$ over the cusp $P_{1}=[\infty]$, the
hermitian metric $\Vert\cdot\Vert$ is described by the formula
\begin{align*}
&\log\big(\Vert
f(\tau,z)\Vert^{2}\big)\Big\vert_{W_{\nu}^0}=\log\big(\vert f(\tau,z)
\vert^{2}\big)\Big\vert_{W_{\nu}^0} \\
&\hspace*{5mm}+\frac{m}{N}\bigg((\nu+1)^{2}\log(u_{\nu}\bar{u}_{\nu})+\nu^
{2}\log(v_{\nu}\bar{v}_{\nu})-\frac{\log(u_{\nu}\bar{u}_{\nu})\log(v_{\nu}\bar{v}_
{\nu})}{\log(u_{\nu}\bar{u}_{\nu})+\log(v_{\nu}\bar{v}_{\nu})}\bigg) \\
&\hspace*{5mm}+k\log\bigg(-\frac{N}{4\pi}\big(\log(u_{\nu}\bar{u}_{\nu})+\log
(v_{\nu}\bar{v}_{\nu})\big)\bigg).
\end{align*}
\end{lemma}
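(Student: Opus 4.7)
The plan is a direct computation: translate the defining formula for $\|\cdot\|^{2}$ from the coordinates $(\tau,z)$ on $\mathbb{H}\times\mathbb{C}$ into the local coordinates $(u_{\nu},v_{\nu})$ using the relations \eqref{local}, then simplify the resulting expression. There is no conceptual obstacle; the whole content is an algebraic identity that must come out right.

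First, from $u_{\nu}v_{\nu}=e^{2\pi i\tau/N}$ I would solve for $\tau$, giving $\tau=\tfrac{N}{2\pi i}\log(u_{\nu}v_{\nu})$, and hence
\[
\eta=\Im(\tau)=-\frac{N}{4\pi}\bigl(\log(u_{\nu}\bar u_{\nu})+\log(v_{\nu}\bar v_{\nu})\bigr).
\]
Similarly, from $u_{\nu}^{\nu+1}v_{\nu}^{\nu}=e^{2\pi iz}$ I would obtain
\[
y=\Im(z)=-\frac{1}{4\pi}\bigl((\nu+1)\log(u_{\nu}\bar u_{\nu})+\nu\log(v_{\nu}\bar v_{\nu})\bigr).
\]
These two formulas immediately give the last summand $k\log\eta$ of the claim, so the only remaining work is to rewrite $-4\pi m\,y^{2}/\eta$.

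Abbreviating $a=\log(u_{\nu}\bar u_{\nu})$ and $b=\log(v_{\nu}\bar v_{\nu})$, the previous two formulas yield
\[
-\frac{4\pi m\,y^{2}}{\eta}=\frac{m}{N}\cdot\frac{\bigl((\nu+1)a+\nu b\bigr)^{2}}{a+b}.
\]
The key algebraic step is the identity
\[
\bigl((\nu+1)a+\nu b\bigr)^{2}=(\nu+1)^{2}a(a+b)+\nu^{2}b(a+b)-ab,
\]
which is checked by expanding both sides (the cross term works out because $(\nu+1)^{2}+\nu^{2}-2\nu(\nu+1)=1$). Dividing through by $a+b$ gives
\[
-\frac{4\pi m\,y^{2}}{\eta}=\frac{m}{N}\Bigl((\nu+1)^{2}a+\nu^{2}b-\frac{ab}{a+b}\Bigr),
\]
which is exactly the middle summand of the lemma.

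Assembling these pieces into $\log\|f\|^{2}=\log|f|^{2}-4\pi m\,y^{2}/\eta+k\log\eta$ finishes the proof. The only step that requires any care is the algebraic identity, but it is elementary; all the other manipulations are direct substitutions from \eqref{local} and the definition of the metric.
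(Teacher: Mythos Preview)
Your proof is correct and follows essentially the same approach as the paper: both compute $\eta$ and $y$ in terms of $\log(u_\nu\bar u_\nu)$ and $\log(v_\nu\bar v_\nu)$ from the relations \eqref{local}, and then simplify $-4\pi m y^{2}/\eta$ via the same algebraic identity (the paper leaves the identity implicit, while you spell it out).
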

\begin{proof}
Taking absolute values, we derive from \eqref{local}
\begin{align*}
&\eta=-\frac{N}{2\pi}\log\vert q_{N}\vert=-\frac{N}{2\pi}\log\vert u_{\nu}v_{\nu}
\vert \\[1mm]
&\hspace*{2mm}=-\frac{N}{4\pi}\big(\log(u_{\nu}\bar{u}_{\nu})+\log(v_{\nu}
\bar{v}_{\nu})\big), \\[2mm]
&y=-\frac{1}{2\pi}\log\vert\zeta\vert=-\frac{1}{2\pi}\log\vert u_{\nu}^{\nu+1}v_
{\nu}^{\nu}\vert \\[1mm]
&\hspace*{2mm}=-\frac{1}{4\pi}\big((\nu+1)\log(u_{\nu}\bar{u}_{\nu})+\nu
\log(v_{\nu}\bar{v}_{\nu})\big).
\end{align*}
With these formulae we compute
\begin{align*}
&-\frac{4\pi my^{2}}{\eta}=\frac{m}{N}\frac{\big((\nu+1)\log(u_{\nu}\bar{u}_
{\nu})+\nu\log(v_{\nu}\bar{v}_{\nu})\big)^{2}}{\log(u_{\nu}\bar{u}_{\nu})+
\log(v_{\nu}\bar{v}_{\nu})}= \\[1mm]
&\frac{m}{N}\bigg((\nu+1)^{2}\log(u_{\nu}\bar{u}_{\nu})+\nu^{2}\log(v_{\nu}
\bar{v}_{\nu})-\frac{\log(u_{\nu}\bar{u}_{\nu})\log(v_{\nu}\bar{v}_{\nu})}{\log
(u_{\nu}\bar{u}_{\nu})+\log(v_{\nu}\bar{v}_{\nu})}\bigg).
\end{align*}
From this the proof follows immediately from the definition of the hermitian
metric $\Vert\cdot\Vert$.
\end{proof}


\section{Mumford-Lear extensions and b-divisors}
\label{sec:mumf-lear-comp}

In this section we will introduce Mumford-Lear extensions of a line
bundle and relate them with b-divisors. We first recall the different
notions of growth for metrics and differential forms that will be
useful in the sequel. 

\nnpar{Notations.}
Let $X$ be a complex algebraic manifold of dimension $d$ and
$D$ a normal crossing divisor of $X$. Write $U=X\setminus D$,
and let $j\colon U\longrightarrow X$ be the inclusion.

Let $V$ be an open coordinate subset of $X$ with coordinates
$z_{1},\dots,z_{d}$; we put $r_{i}=|z_{i}|$. We say that $V$
\emph{is adapted to $D$}, if the divisor $D$ is locally given
by the equation $z_{1}\cdots z_{k}=0$. We assume that the 
coordinate neighborhood $V$ is small enough; more precisely, 
we will assume that all the coordinates satisfy $r_{i}<1/e^{e}$, 
which implies that $\log 1/r_{i}>e$ and $\log(\log 1/r_{i})>1$.

If $f$ and $g$ are two functions with non-negative real values,
we will write $f\prec g$, if there exists a constant $C>0$
such that $f(x)\le C\cdot g(x)$ for all $x$ in the domain of
definition under consideration.

\nnpar{log-log growth forms.}
\begin{definition}
\label{def:loglog}
We say that a smooth complex function $f$ on $X\setminus D$ 
has \emph{log-log growth along $D$}, if we have
\begin{equation}
\label{eq:loglog1}
|f(z_{1},\dots,z_{d})|\prec\prod_{i=1}^{k}\log(\log(1/r_{i}))^
{M}
\end{equation}
for any coordinate subset $V$ adapted to $D$ and some positive
integer $M$. The \emph{sheaf of differential forms on $X$ with
log-log growth along $D$} is the subalgebra of $j_{\ast}\mathscr
{E}^{\ast}_{U}$ generated, in each coordinate neighborhood $V$
adapted to $D$, by the functions with log-log growth along $D$
and the differentials
\begin{alignat*}{2}
&\frac{\dd z_{i}}{z_{i}\log(1/r_{i})},\,\frac{\dd\bar{z}_{i}}
{\bar{z}_{i}\log(1/r_{i})},&\qquad\text{for }i&=1,\dots,k, \\
&\dd z_{i},\,\dd\bar{z}_{i},&\qquad\text{for }i&=k+1,\dots,d.
\end{alignat*}
If $D$ is clear form the context, 
a differential form with log-log growth along $D$ will be called
a \emph{log-log growth form}.
\end{definition}

\nnpar{Dolbeault algebra of pre-log-log forms.}
Clearly, the forms with log-log growth form an algebra but not a
differential algebra. To remedy this we impose conditions on the
derivatives as well.  

\begin{definition}
A log-log growth form $\omega$ such that $\partial\omega$,
$\bar{\partial}\omega$ and $\partial\bar{\partial}\omega$
are also log-log growth forms is called a \emph{pre-log-log
form (along $D$)}. The sheaf of pre-log-log forms is the subalgebra of
$j_{\ast}\mathscr{E}^{\ast}_{U}$ generated by the pre-log-log
forms. We will denote this complex by $\mathscr{E}^{\ast}_{X}
\langle\langle D\rangle\rangle_{\w}$. The pre-log-log forms of degree
zero are called \emph{pre-log-log} functions.
\end{definition}

The sheaf $\mathscr{E}^{\ast}_{X}\langle\langle D\rangle\rangle_
{\w}$, together with its real structure, its bigrading, and the
usual differential operators $\partial$, $\bar{\partial}$ is 
easily shown to be a sheaf of Dolbeault algebras. Moreover, 
it is the maximal subsheaf of Dolbeault algebras of the sheaf 
of differential forms with log-log growth.

\nnpar{Metrics with logarithmic growth and pre-log metrics.}
Let $L$ be a line bundle on $X$ and let $\|\cdot\|$ be a smooth hermitian
metric on $L| _{U}$.

\begin{definition} \label{def:1} We will say that the metric $\|\cdot\|$ has
  \emph{logarithmic growth (along $D$)} if, for every point $x\in X$,
  there is a coordinate neighbourhood $V$ of $x$ adapted to $D$, a nowhere zero
  regular section $s$ of $L$ on $V$, and an integer $M\ge 0$ such that
  \begin{equation}
    \prod_{i=1}^{k}\log(1/r_{i})^
    {-M}\prec
    \|s(z_{1},\dots,z_{d})\|\prec\prod_{i=1}^{k}\log(1/r_{i})^
    {M}
  \end{equation}
\end{definition}

\begin{definition} \label{def:2} We will say that the metric
  $\|\cdot\|$ is a 
  \emph{pre-log metric (along $D$)} if it has logarithmic growth and,
  for every rational section $s$ of $L$, the function $\log\|s\|$ is a
  pre-log-log form along $D\setminus \dv(s)$ on $X\setminus \dv(s)$.
\end{definition}

\nnpar{Mumford-Lear extensions.} We are now able to define
Mumford-Lear extensions. For the remainder of the section we fix a
complex algebraic manifold $X$ of dimension $d$, 
$D$ and $U$ as before, and we also fix a hermitian line bundle $\overline
L=(L,\|\cdot\|)$ on $U$. 

\begin{definition} \label{def:3}
  We say that \emph{$\overline L$ admits
    a Mumford-Lear extension to $X$} if there is an integer $e\ge 1$, a
  line bundle $\mathcal {L}$ on $X$, an algebraic subset $S\subset X$
  of codimension at least 2 that is contained in $D$, a smooth
  hermitian metric $\|\cdot\|$ on $\mathcal {L}|_{U}$ that has
  logarithmic growth along $D\setminus S$ and an
  isometry $\alpha \colon
  (L,\|\cdot\|)^{\otimes e}\to (\mathcal {L}|_U,\|\cdot\|)$. The $5$-tuple
  $(e,\mathcal{L},S,\|\cdot\|,\alpha )$ is called a \emph{Mumford-Lear
    extension} of $\overline L$. When the isomorphism $\alpha $, the
  metric and the set $S$ can be deduced by the context, we will denote
  the Mumford-Lear extension by $(e,\mathcal{L})$. If $e=1$, we will
  denote it by the line bundle $\mathcal{L}$. 
\end{definition}

\begin{remark}
  The name Mumford-Lear extension arises because they generalize (in
  the case of line bundles) the extensions of hermitian vector bundles
  considered by Mumford in 
  \cite{Mumford:Hptncc} and the extensions of line bundles considered
  by Lear in his thesis \cite{Lear:enfahp}.
\end{remark}

The Mumford-Lear extensions satisfy the following unicity property.

\begin{proposition}\label{prop:1} Assume that $\overline L$ admits
    a Mumford-Lear extension to $X$.
  Let $(e_{1},\mathcal{L}_{1},S_{1},\|\cdot\|_{1},\alpha
  _{1})$ and $(e_{2},\mathcal{L}_{2},S_{2},\|\cdot\|_{2},\alpha
  _{2})$ be two Mumford-Lear extensions of $\overline L$ to $X$. Then
  there is a unique isomorphism $$\varphi\colon
  \mathcal{L}_{1}^{\otimes e_{2}}\to
  \mathcal{L}_{2}^{\otimes e_{1}}$$ such that the diagram
  \begin{displaymath}
    \xymatrix{& \mathcal{L}_{1}^{\otimes e_{2}}|_{U}
      \ar[dd]^{\varphi| _{U}}\\ 
      L^{\otimes e_{1}e_{2}}\ar[ur]^{\alpha _{1}^{\otimes e_{2}}}
      \ar[dr]^{\alpha _{2}^{\otimes e_{1}}}&\\
      & \mathcal{L}_{2}^{\otimes e_{1}}|_{U}
    }
  \end{displaymath}
  is commutative.
\end{proposition}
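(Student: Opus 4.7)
The plan is to construct $\varphi$ by first producing it on the open locus $U$, then extending across $D$ using the logarithmic-growth control on the metrics, and finally extending across a codimension two set using a Hartogs-type argument; uniqueness is automatic since $U$ is dense.

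Specifically, on $U$ the composition $\psi:=\alpha_{2}^{\otimes e_{1}}\circ (\alpha_{1}^{\otimes e_{2}})^{-1}\colon \mathcal{L}_{1}^{\otimes e_{2}}|_{U}\to \mathcal{L}_{2}^{\otimes e_{1}}|_{U}$ is a canonically defined isomorphism, and moreover, since both $\alpha_{i}$ are isometries between $(L,\|\cdot\|)^{\otimes e_{i}}$ and $(\mathcal{L}_{i}|_{U},\|\cdot\|_{i})$, the map $\psi$ is an isometry between the induced metrics on $\mathcal{L}_{1}^{\otimes e_{2}}|_{U}$ and $\mathcal{L}_{2}^{\otimes e_{1}}|_{U}$. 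Set $S:=S_{1}\cup S_{2}$, which is a closed algebraic subset of $X$ of codimension at least two.

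The core step is to extend $\psi$ to an algebraic isomorphism on $X\setminus S$. Pick a point $x\in D\setminus S$ and a coordinate neighborhood $V$ adapted to $D$, small enough that nowhere zero regular sections $s_{i}$ of $\mathcal{L}_{i}$ exist on $V$ and that the metrics $\|\cdot\|_{i}$ satisfy the bounds of Definition~\ref{def:1} on $V\cap U$. On $V\cap U$ one writes $\psi(s_{1}^{\otimes e_{2}})=f\cdot s_{2}^{\otimes e_{1}}$ for a nowhere zero holomorphic function $f$, and the isometry property forces
\begin{equation*}
|f|=\frac{\|s_{1}\|_{1}^{e_{2}}}{\|s_{2}\|_{2}^{e_{1}}}.
\end{equation*}
Combining the upper and lower bounds of Definition~\ref{def:1} for both metrics yields an integer $M\ge 0$ such that both $|f|$ and $|f|^{-1}$ are dominated by $\prod_{i=1}^{k}\log(1/r_{i})^{M}$ on $V\cap U$. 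The hard part is now a removable singularity statement: a holomorphic function on $V\setminus D$ whose modulus is dominated by such a product of logarithms extends holomorphically across $D$. This follows by applying the one-variable Riemann removable singularity theorem slice-wise in each of the coordinates $z_{1},\dots,z_{k}$, since any pole along $\{z_{i}=0\}$ would force growth of order $r_{i}^{-n}$ which cannot be bounded by any power of $\log(1/r_{i})$; equivalently, the Laurent expansion of $f$ in $z_{1},\dots,z_{k}$ has no negative terms. The same argument applied to $1/f$ shows that the extension of $f$ is still nowhere zero. Thus $\psi$ extends uniquely to an isomorphism over $V$, and gluing these local extensions yields an isomorphism $\varphi'\colon \mathcal{L}_{1}^{\otimes e_{2}}|_{X\setminus S}\to \mathcal{L}_{2}^{\otimes e_{1}}|_{X\setminus S}$.

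Finally, to extend across $S$, regard $\varphi'$ as a nowhere vanishing section of the line bundle $\mathcal{L}_{1}^{-\otimes e_{2}}\otimes \mathcal{L}_{2}^{\otimes e_{1}}$ on $X\setminus S$. Because $X$ is smooth and $\codim_{X}S\ge 2$, Hartogs' theorem for line bundles on smooth varieties extends this section, together with its inverse, to sections over all of $X$; by continuity the extensions remain mutually inverse, hence give the required isomorphism $\varphi\colon \mathcal{L}_{1}^{\otimes e_{2}}\to \mathcal{L}_{2}^{\otimes e_{1}}$. Uniqueness of $\varphi$ is immediate: any two such isomorphisms coincide on the dense open set $U$ by the commutativity of the diagram, hence coincide on $X$. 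The main obstacle is the removable singularity step along $D\setminus S$, where one must exploit the two-sided logarithmic bound on $|f|$ to rule out both poles and zeros of the extended function.
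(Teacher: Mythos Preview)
Your proof is correct and follows essentially the same route as the paper: define the isomorphism on $U$ via $\alpha_{2}^{\otimes e_{1}}\circ(\alpha_{1}^{\otimes e_{2}})^{-1}$, extend across $D\setminus S$ using the logarithmic growth control, and then extend across the codimension-two set $S=S_{1}\cup S_{2}$ by normality/Hartogs. The only difference is that the paper outsources the extension step across $D\setminus S$ to the proof of \cite[Proposition~1.3]{Mumford:Hptncc}, whereas you unpack that argument explicitly (the two-sided logarithmic bound on $|f|$ forcing both $f$ and $1/f$ to extend holomorphically); this is exactly the content of Mumford's result, so the approaches are the same in substance.
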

\begin{proof}
  The composition $\alpha_{2} ^{\otimes e_{1}}\circ (\alpha
  _{1}^{-1})^{\otimes e_{2}}$ defines an isomophism between the line
  bundles 
  $\mathcal{L}_{1}^{\otimes e_{2}}|_{U}$ and
  $\mathcal{L}_{2}^{\otimes e_{1}}|_{U}$ that is the only one that
  makes the diagram in the theorem commutative. Put $S=S_{1}\cup
  S_{2}$. The proof of
  \cite[Proposition 1.3]{Mumford:Hptncc} shows that this isomorphism
  extends uniquely to an isomorphism $\varphi_{1}\colon
  \mathcal{L}_{1}^{\otimes e_{2}}|_{X\setminus S}\to
  \mathcal{L}_{2}^{\otimes e_{1}}|_{X\setminus S}$. Since $X$ is
  smooth and $S$ has codimension 2, the isomorphism $\varphi_{1}$
  extends to a unique 
  isomorphism $\varphi\colon
  \mathcal{L}_{1}^{\otimes e_{2}}\to
  \mathcal{L}_{2}^{\otimes e_{1}}$ satisfying the condition of the
  proposition. 
\end{proof}

The next result is an immediate consequence of Proposition \ref{prop:1}. 

\begin{corollary}
  Assume the hypothesis of the previous proposition. Let $s$ be a
  rational section of $L$, so $\alpha
  _{1}(s^{\otimes  e_{1}})^{\otimes e_{2}}$ and $\alpha
  _{2}(s^{\otimes  e_{2}})^{\otimes e_{1}}$ are rational sections of
  $\mathcal{L}_{1}^{\otimes e_{2}}$ and $\mathcal{L}_{2}^{\otimes
    e_{1}}$, respectively. Then
  \begin{displaymath}
    \dv(\alpha
    _{1}(s^{\otimes  e_{1}})^{\otimes e_{2}})=
    \dv(\alpha
  _{2}(s^{\otimes  e_{2}})^{\otimes e_{1}})
  \end{displaymath}
  as Cartier divisors on $X$.
\end{corollary}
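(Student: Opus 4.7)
The plan is to deduce the corollary directly from the proposition: the isomorphism $\varphi$ constructed in Proposition~\ref{prop:1} intertwines the two rational sections appearing in the statement, so the divisors must coincide. The main content is just to keep track of extending rational sections from $U$ to $X$ and to invoke the fact that an isomorphism of line bundles preserves divisors of rational sections.

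First I would apply Proposition~\ref{prop:1} to obtain the unique isomorphism $\varphi\colon \mathcal{L}_{1}^{\otimes e_{2}}\to \mathcal{L}_{2}^{\otimes e_{1}}$. Next I would note that $\alpha_{1}(s^{\otimes e_{1}})^{\otimes e_{2}}$ and $\alpha_{2}(s^{\otimes e_{2}})^{\otimes e_{1}}$ are a priori rational sections only of $\mathcal{L}_{1}^{\otimes e_{2}}|_{U}$ and $\mathcal{L}_{2}^{\otimes e_{1}}|_{U}$; but because $X$ is smooth (in particular normal) and $U\subset X$ is a dense open subset, each of these rational sections extends uniquely to a rational section of the corresponding line bundle over all of $X$, so speaking about their divisors as Cartier divisors on $X$ is meaningful.

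The commutativity of the diagram in Proposition~\ref{prop:1} expresses, on $U$, the equality
\begin{equation*}
\varphi|_{U}\bigl(\alpha_{1}(s^{\otimes e_{1}})^{\otimes e_{2}}\bigr)
=\alpha_{2}(s^{\otimes e_{2}})^{\otimes e_{1}},
\end{equation*}
since both sides are the image of $s^{\otimes e_{1}e_{2}}$ under the two legs of the diagram. Because $\varphi$ is an isomorphism of line bundles on all of $X$ and two rational sections that agree on the dense open $U$ agree on $X$, the displayed equality holds as rational sections of $\mathcal{L}_{2}^{\otimes e_{1}}$ on $X$.

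Finally, any isomorphism of line bundles $\varphi\colon \mathcal{M}_{1}\to \mathcal{M}_{2}$ satisfies $\dv(\varphi(t))=\dv(t)$ for every rational section $t$ of $\mathcal{M}_{1}$, because the local equation of the divisor is determined by the local trivializations and $\varphi$ identifies them up to a nowhere vanishing regular function. Applying this with $t=\alpha_{1}(s^{\otimes e_{1}})^{\otimes e_{2}}$ yields the desired equality of Cartier divisors on $X$. There is no real obstacle; the only point that needs a word of justification is the extension of rational sections from $U$ to $X$, which is immediate from the smoothness of $X$ and the density of $U$.
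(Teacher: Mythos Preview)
Your argument is correct and is exactly the intended one: the paper in fact leaves the proof environment empty, treating the statement as an immediate consequence of Proposition~\ref{prop:1}. Your write-up simply unpacks that ``immediate'' by noting that $\varphi$ carries $\alpha_{1}(s^{\otimes e_{1}})^{\otimes e_{2}}$ to $\alpha_{2}(s^{\otimes e_{2}})^{\otimes e_{1}}$ and that isomorphisms of line bundles preserve divisors of rational sections.
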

\begin{proof}  
\end{proof}

This result allows us to associate to each rational section of $L$ a
$\QQ$-Cartier divisor on $X$. We will denote by $\QCa(X)$ the group of
$\QQ$-Cartier divisors of $X$.

\begin{definition} \label{def:4}
  Assume that $\overline L$ admits
    a Mumford-Lear extension to $X$ and let
    $(e,\mathcal{L},S,\|\cdot\|,\alpha )$ be one such extension.  Let
    $s$ be a rational section of $L$. Then we define \emph{the 
    divisor of $s$ on $X$} as the $\QQ$-Cartier divisor
  \begin{displaymath}
    \dv_{X}(s)=\frac{1}{e}\dv(\alpha (s^{\otimes e}))\in \QCa(X),
  \end{displaymath}
  where $\dv(\alpha (s^{\otimes e}))$ is the divisor of $\alpha
  (s^{\otimes e})$ viewed as a rational section of $\mathcal{L}$ on
  the whole $X$.
\end{definition}

\nnpar{Mumford-Lear extensions and birational maps.} We now consider
Mumford-Lear extensions on different birational models of $X$.

\begin{notation} \label{def:5}
  Let
  $\mathcal{C}$ be the category whose objects 
  are pairs $(Y,\pi _{Y})$, where $Y$ is a smooth complex
  variety and $\pi _{Y}\colon Y\to X$ is a proper birational map, and
  whose morphisms are maps $\varphi\colon Y\to Z$ such that $\pi
  _{Z}\circ \varphi =\pi _{Y}$.
  We denote by $\Bir(X)$ the set of isomorphism classes in
  $\mathcal{C}$. Since the set of morphisms between two objects of
  $\mathcal{C}$ is either empty or contains a single element, the set
  $\Bir(X)$ is itself a small category equivalent
  to $\mathcal{C}$. In fact $\Bir(X)$ is a directed set.
  As a shorthand, an element $(Y,\pi _{Y})$
  of $\Bir(X)$ 
  will be denoted by the variety $Y$, the map $\pi _{Y}$ being
  implicit. For an object $Y$ of $\Bir(X)$ we will denote $U_{Y}=\pi
  _{Y}^{-1}(U)$ and $D_{Y}=\pi
  _{Y}^{-1}(D)$. By abuse of notation we will denote also by $\pi _{Y}$ the
  induced proper birational map from $U_{Y}$ to $U$. Finally, we will
  denote by $\Bir'(X)$ the subset consisting of the elements $Y$ with
  $D_{Y}$ a normal crossing divisor. This is a cofinal subset.
\end{notation}

\begin{definition} \label{def:6} We say that $\overline L$ \emph{admits all
  Mumford-Lear extensions over $X$} if, for every object $Y$ of
  $\Bir'(X)$, the hermitian line bundle $\pi _{Y}^{\ast}\overline L$
  on $U_{Y}$ admits a Mumford-Lear extension to $Y$.
\end{definition}

\begin{definition}\label{def:7}
  Assume that $\overline L$ admits all
  Mumford-Lear extensions over $X$. For every $Y\in \Bir'(X)$, let
  $(e',\mathcal{L}',S',\|\cdot\|',\alpha' )$ be 
  a
  Mumford-Lear extension of $\pi _{Y}^{\ast}\overline L$ 
  to $Y$. Then the \emph{divisor of $s$ on $Y$} is
  defined as
  \begin{displaymath}
    \dv_{Y}(s)=\frac{1}{e'}\dv(\alpha' (s^{\otimes e'}))\in \QCa(Y).
  \end{displaymath}    
\end{definition}

The $\QQ$-Cartier divisors of Definition \ref{def:7} do not need to
be compatible with inverse images. As we will see in concrete
examples, it may happen that there are maps $\varphi\colon Y\to Z$ in
$\Bir'(X)$ such that
\begin{displaymath}
  \varphi ^{\ast}\dv_{Z}(s)\not = \dv_{Y}(s).
\end{displaymath}
This lack of compatibility with inverse images is related with the
phenomenon of height jumping (see \cite{Hain:nfgmsc} and
\cite{Pearlstein:dl2o} for a discussion of height jumping).  

In contrast, the divisors associated with Mumford-Lear extensions
are compatible with direct images.

\begin{proposition}
  Assume that $\overline L$ admits all Mumford-Lear extensions over
  $X$. 
  Let $\varphi\colon Y\to Z$ be a map in $\Bir'(X)$ and $s$ a section
  of $L$. Then
  \begin{displaymath}
    \varphi_{\ast}\dv_{Y}(s)=\dv_{Z}(s).
  \end{displaymath}
\end{proposition}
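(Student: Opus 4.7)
My plan is to compare the two chosen Mumford-Lear extensions on $Y$ and $Z$ over the open locus where $\varphi$ is an isomorphism, then propagate the resulting identity back by pushforward. Fix Mumford-Lear extensions $(e_Y,\mathcal{L}_Y,S_Y,\|\cdot\|_Y,\alpha_Y)$ of $\pi_Y^{\ast}\overline{L}$ on $Y$ and $(e_Z,\mathcal{L}_Z,S_Z,\|\cdot\|_Z,\alpha_Z)$ of $\pi_Z^{\ast}\overline{L}$ on $Z$; these exist by hypothesis. Since $\varphi$ is a proper birational morphism between smooth varieties, its exceptional locus $E\subset Y$ is a (possibly empty) divisor whose image $T=\varphi(E)\subset Z$ has codimension at least $2$, and $\varphi$ restricts to an isomorphism $Y\setminus E\xrightarrow{\sim}Z\setminus T$.

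Next, I would apply Proposition~\ref{prop:1} on the open manifold $Y\setminus E$, with its normal crossing divisor $D_Y\cap(Y\setminus E)$. On this open set we have two Mumford-Lear extensions of $\pi_Y^{\ast}\overline{L}$: the restriction of $(e_Y,\mathcal{L}_Y,\ldots)$, and the pullback $(e_Z,\varphi^{\ast}\mathcal{L}_Z|_{Y\setminus E},\varphi^{-1}(S_Z)\cap(Y\setminus E),\varphi^{\ast}\|\cdot\|_Z,\varphi^{\ast}\alpha_Z)$ obtained from the extension on $Z$ via the isomorphism $\varphi|_{Y\setminus E}$. Both conditions of Definition~\ref{def:3} are preserved by this pullback because $\varphi|_{Y\setminus E}$ is a biholomorphism. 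The corollary to Proposition~\ref{prop:1} then gives
\begin{equation*}
e_Z\,\dv\bigl(\alpha_Y(s^{\otimes e_Y})\bigr)=e_Y\,\dv\bigl(\varphi^{\ast}\alpha_Z(s^{\otimes e_Z})\bigr)\quad\text{on }Y\setminus E,
\end{equation*}
equivalently $\dv_Y(s)=\varphi^{\ast}\dv_Z(s)$ as $\QQ$-Cartier divisors on $Y\setminus E$.

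To finish, I would extend back to $Y$: the difference $\dv_Y(s)-\varphi^{\ast}\dv_Z(s)$ is a $\QQ$-Cartier divisor supported on the exceptional locus $E$. Applying $\varphi_{\ast}$ annihilates each exceptional component by dimension considerations, while the projection formula for proper birational morphisms between smooth varieties gives $\varphi_{\ast}\varphi^{\ast}\dv_Z(s)=\dv_Z(s)$. Combining these yields the desired equality $\varphi_{\ast}\dv_Y(s)=\dv_Z(s)$. The main technical obstacle is the verification that the pullback via $\varphi|_{Y\setminus E}$ of the Mumford-Lear extension on $Z$ really satisfies Definition~\ref{def:3} on $Y\setminus E$; this reduces to checking that log-growth of the metric transports through a biholomorphism and that the preimage of a codimension-two subset under a biholomorphism remains codimension two, both of which are routine once one uses that $\varphi|_{Y\setminus E}$ is an isomorphism.
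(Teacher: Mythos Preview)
Your proof is correct and follows essentially the same strategy as the paper: compare the two Mumford--Lear extensions on the locus where $\varphi$ is an isomorphism using the unicity statement (Proposition~\ref{prop:1} and its corollary), and then use that what remains is exceptional for $\varphi$ and hence killed by $\varphi_{\ast}$. The only cosmetic difference is that the paper phrases the last step via the restriction isomorphism $\QCa(Z)\to\QCa(Z\setminus T)$ (valid because $T$ has codimension $\geq 2$) and a commutative diagram, whereas you phrase it via $\varphi_{\ast}\varphi^{\ast}=\Id$ together with $\varphi_{\ast}E'=0$ for each exceptional component $E'$; these are equivalent formulations of the same fact.
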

\begin{proof}
  Let $T$ be the subset of $Z$ where $\varphi^{-1}$ is not defined. Since $Z$
  is smooth, hence normal, by Zariski's main theorem, $T$ has
  codimension at least 2. Write
  $W=Z\setminus T$ and let $U'=U_{Z}\cap W$. 
  Then $\overline L$
  induces a line bundle on $U'$ that admits a Mumford-Lear extension
  to $W$.  
  
  Since $T$ has codimension 2, the restriction map  $$\QCa(Z)\to
  \QCa(W)$$ is an isomorphism. Moreover, using the definition is easy
  to see that
  \begin{displaymath}
    \dv_{Y}(s)| _{W}=\dv_{W}(s)=\dv_{Z}(s)| _{W}.
  \end{displaymath}
  Thus the proposition follows from the commutativity of the diagram
  \begin{displaymath}
    \xymatrix{ \QCa(Y) \ar[r] \ar[d]^{\varphi_{\ast}}& \QCa(W)
      \ar@{=}[d]\\
      \QCa(Z) \ar[r]^{\simeq} & \QCa(W)
    }.
  \end{displaymath}
\end{proof}

\nnpar{B-Divisors.} Recall that the Zariski-Riemann space of $X$ is
the projective limit
\begin{displaymath}
  \mathfrak{X}=\lim_{\substack{\longleftarrow\\\Bir(X)}}Y.
\end{displaymath}
We are not going to use the structure of this space,
which is introduced here merely in order to make later definitions more
suggestive. 

For the definition of b-divisors, we will follow the point of view of
\cite{BoucksomFavreJonsson:dvpt}. The groups $\QCa(Y)$, $Y\in \Bir(X)$,
form a projective system with the push-forward morphisms and an
inductive system with the pull-back morphisms. We define the group of
$\QQ$-Cartier divisor on $\mathfrak{X}$ as the inductive limit 
\begin{displaymath}
  \QCa(\mathfrak{X})=\lim_{\substack{\longrightarrow\\\Bir(X)^{\op}}} \QCa(Y)
\end{displaymath}
and the group of $\QQ$-Weil divisors on $\mathfrak{X}$ as the
projective limit  
\begin{displaymath}
  \QWe(\mathfrak{X})=\lim_{\substack{\longleftarrow\\\Bir(X)}} \QCa(Y).
\end{displaymath}
Since, for any map $\varphi$ in $\Bir(X)$ the composition
$\varphi_{\ast}\circ \varphi^{\ast}$ is the identity, it is easy to
see that there is a map $\QCa(X)\to \QWe(X)$.  Note also that, since
$\Bir'(X)$ is cofinal, the above projective and inductive limit can be
taken over $\Bir'(X)$.

The group of $\QQ$-Weil divisors of $\mathfrak{X}$ is closely
related to the group of b-divisors of $X$ defined in
\cite{Shokurov:pf}. Thus a $\QQ$-Weil divisors of $\mathfrak{X}$ will
be called a b-divisors of $X$.

The following definition makes sense thanks to Proposition
\ref{prop:1}.

\begin{definition}
  Assume that $\overline L$ admits all Mumford-Lear extensions over
  $X$. Let $s$ be rational section of $L$. Then the \emph{b-divisor
    associated} to $s$ is
  \begin{displaymath}
    \bdv(s)=(\dv_{Y}(s))_{Y\in \Bir'(X)}\in \QWe(\mathfrak{X}).
  \end{displaymath}
  When it is needed to specify with respect to which metric we are
  compactifying the divisor, we will write $\bdv(s,\|\cdot\|)$.
\end{definition}

\nnpar{Integrable b-divisors.} From now on we restrict ourselves to the
case when $X$ is a surface. We want to extend the intersection product
of divisors as much as possible to b-divisors. 

It is clear that there is an intersection pairing
\begin{displaymath}
  \QCa(\mathfrak{X})\times \QWe(\mathfrak{X})\to \QQ
\end{displaymath}
defined as follows. Let $C\in \QCa(\mathfrak{X})$ and $E\in
\QWe(\mathfrak{X})$. Then there is an object $Y\in \Bir'(X)$ and a
divisor
$C_{Y}\in \QCa(X)$ such that $C$ is the image of $C_{Y}$. Let $E_{Y}$
be the component of $E$ on $Y$. Then, by
the projection formula, the
intersection product $C_{Y}\cdot E_{Y}$ does not depend on the choice of
$Y$. Thus we define
\begin{displaymath}
  C\cdot E=C_{Y}\cdot E_{Y}.
\end{displaymath}

But, in general, we can not define the intersection product of two
elements of $\QWe(\mathfrak{X})$. The following definition is the
analogue for b-divisors of the concept of $L^{2}$-function. Recall
that, since $\Bir'(X)$ is a directed set, it is in particular a net.

\begin{definition}
  A divisor $C=(C_{Y})_{Y\in \Bir'(X)}\in \QWe(\mathfrak{X})$ is
  called integrable if the limit
  \begin{displaymath}
    \lim_{\substack{\longrightarrow\\\Bir'(X)}}C_{Y}\cdot C_{Y}
  \end{displaymath}
  exists and is finite.
\end{definition}

\begin{proposition}
  Let $C_{1}, C_{2}\in \QWe(\mathfrak{X})$. If $C_{1}$ and $C_{2}$ are
  integrable, then 
  \begin{displaymath}
    \lim_{\substack{\longrightarrow\\\Bir'(X)}}C_{1,Y}\cdot C_{2,Y}
  \end{displaymath}
  exists and is finite.
\end{proposition}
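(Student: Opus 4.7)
My plan is to prove convergence by showing that the net $(C_{1,Y}\cdot C_{2,Y})_{Y\in \Bir'(X)}$ is Cauchy, via a polarization-style argument. The key input will be the classical negativity theorem: on a smooth surface, the intersection form restricted to the $\QQ$-vector space of $\varphi$-exceptional divisors of a proper birational morphism $\varphi\colon Y'\to Y$ between smooth surfaces is negative definite.

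First, for any Weil b-divisor $C$ and any morphism $\varphi\colon Y'\to Y$ in $\Bir'(X)$, I would decompose $C_{Y'}=\varphi^{\ast}C_{Y}+E_{Y'/Y}$. Since $\varphi_{\ast}C_{Y'}=C_{Y}$ is built into the definition of a Weil b-divisor and $\varphi_{\ast}\varphi^{\ast}=\Id$ on $\QCa(Y)$, the term $E_{Y'/Y}$ is $\varphi$-exceptional. The projection formula then gives $\varphi^{\ast}C_{Y}\cdot E_{Y'/Y}=0$, and expanding the product yields the clean identity
\begin{equation*}
C_{1,Y'}\cdot C_{2,Y'}-C_{1,Y}\cdot C_{2,Y}=E_{1,Y'/Y}\cdot E_{2,Y'/Y}.
\end{equation*}
Specialising to $C_{1}=C_{2}=C$ and invoking negativity shows that $Y\mapsto C_{Y}^{2}$ is a monotone non-increasing net, so integrability of $C$ is equivalent to this net being bounded below and hence convergent.

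The heart of the argument is then the Cauchy--Schwarz inequality for the negative definite intersection form on exceptional divisors, which will give
\begin{equation*}
|E_{1,Y'/Y}\cdot E_{2,Y'/Y}|\le \sqrt{\bigl(C_{1,Y}^{2}-C_{1,Y'}^{2}\bigr)\bigl(C_{2,Y}^{2}-C_{2,Y'}^{2}\bigr)}.
\end{equation*}
Since the monotone nets $C_{j,Y}^{2}$ converge by integrability, they are Cauchy, so the right-hand side can be made arbitrarily small for $Y'\ge Y$ with $Y$ sufficiently large. To upgrade this to a Cauchy condition for incomparable $Y_{1},Y_{2}$, I would exploit the fact that $\Bir'(X)$ is directed: pick $Y_{3}$ dominating both and apply the triangle inequality through $Y_{3}$.

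I do not expect a serious obstacle. This is the standard polarization pattern for extending a bilinear form by continuity once Cauchy--Schwarz is in hand, applied here to the ``dense'' subspace of Cartier-like b-divisors sitting inside the integrable ones. The only points requiring minor care are the reduction to $\QQ$-coefficients (by clearing denominators, since both projection formula and negativity pass to $\QQ$-divisors) and the bookkeeping for the directed-set Cauchy argument sketched above.
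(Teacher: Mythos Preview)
Your proposal is correct and follows essentially the same route as the paper: decompose $C_{Y'}=\varphi^{\ast}C_{Y}+E$ with $E$ exceptional, use negativity of the intersection form on exceptional divisors (the paper phrases this as the Hodge index theorem) to get monotonicity of $C_{Y}^{2}$, and then control the cross term. The only cosmetic difference is that you bound $|E_{1}\cdot E_{2}|$ via Cauchy--Schwarz, whereas the paper applies the monotonicity directly to $C_{1}\pm C_{2}$ and extracts the arithmetic-mean bound $|C_{1,Y}\cdot C_{2,Y}-C_{1,Z}\cdot C_{2,Z}|\le\tfrac12(|C_{1,Y}^{2}-C_{1,Z}^{2}|+|C_{2,Y}^{2}-C_{2,Z}^{2}|)$; either inequality suffices.
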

\begin{proof}
  Let $C=(C_{Y})\in \QWe(\mathfrak{X})$ and let $\varphi\colon Y\to Z$
  be an arrow in $\Bir'(X)$. Since 
  $\varphi_{\ast}C_{Y}=C_{Z}$, we deduce that
  \begin{displaymath}
    C_{Y}=\varphi^{\ast} C_{Z}+E,
  \end{displaymath}
  where $E$ is an exceptional divisor for the map
  $\varphi$. Hence,
  \begin{displaymath}
    C_{Y}\cdot C_{Y}=(\varphi^{\ast}
    C_{Z}+E)\cdot(\varphi^{\ast} C_{Z}+E)=C_{Z}\cdot
    C_{Z}+E\cdot E.
  \end{displaymath}
  Thus, by the Hodge index theorem,
  \begin{displaymath}
    C_{Y}\cdot C_{Y}-C_{Z}\cdot
    C_{Z}=E\cdot E\le 0.
  \end{displaymath}
  Hence
  \begin{multline*}
    0\ge (C_{1,Y}\pm C_{2,Y})^2-(C_{1,Z}\pm C_{2,Z})^{2}\\=
    C_{1,Y}^{2}-C_{1,Z}^{2}+C_{2,Y}^{2}-C_{2,Z}^{2}\pm 2(C_{1,Y}\cdot
    C_{2,Y}-C_{1,Z}\cdot
    C_{2,Z}).
  \end{multline*}
  Therefore
  \begin{multline*}
    C_{1,Y}\cdot
    C_{2,Y}-C_{1,Z}\cdot
    C_{2,Z}\le
    -\frac{1}{2}(C_{1,Y}^{2}-C_{1,Z}^{2}+C_{2,Y}^{2}-C_{2,Z}^{2})\\
    =\frac{1}{2}(|C_{1,Y}^{2}-C_{1,Z}^{2}|+|C_{2,Y}^{2}-C_{2,Z}^{2}|)
  \end{multline*}
  and 
  \begin{multline*}
    C_{1,Y}\cdot
    C_{2,Y}-C_{1,Z}\cdot
    C_{2,Z}\ge
    \frac{1}{2}(C_{1,Y}^{2}-C_{1,Z}^{2}+C_{2,Y}^{2}-C_{2,Z}^{2})\\
    =-\frac{1}{2}(|C_{1,Y}^{2}-C_{1,Z}^{2}|+|C_{2,Y}^{2}-C_{2,Z}^{2}|).
  \end{multline*}
  Thus
  \begin{displaymath}
        |C_{1,Y}\cdot
    C_{2,Y}-C_{1,Z}\cdot
    C_{2,Z}|\le
    \frac{1}{2}(|C_{1,Y}^{2}-C_{1,Z}^{2}|+|C_{2,Y}^{2}-C_{2,Z}^{2}|)
  \end{displaymath}
  Thus the convergence of $(C_{1,Y}^{2})_{Y}$ and $(C_{2,Y}^{2})_{Y}$
  implies the convergence of $(C_{1,Y}\cdot
    C_{2,Y})_{Y}$.
\end{proof}

\section{The Mumford-Lear extension of the line bundle of Jacobi
  forms}
\label{sec:mumf-lear-extens-jacobi}

In this section we will study the Mumford-Lear extensions of the line
bundle of Jacobi forms.

\nnpar{The functions $f_{n,m}$.} We first study a family of
functions that will be useful latter. 
Let $(n,m)$ be a pair of coprime
positive integers. Let $u,v$ be coordinates of $\CC^{2}$ and denote
$U_{n,m}\subset \mathcal{\CC}^{2}$ the 
open subset defined by $|uv|<1$. Let $D\subset U_{n,m}$ be the normal
crossing divisor of equation $uv=0$.

\begin{proposition}\label{prop:3}
Let $f_{n,m}$ be the function on $U_{n,m}$ given by
\begin{displaymath}
  f_{n,m}(u,v)=\frac{1}{nm}\frac{\log(u\overline u)\log(v\overline v)}
  {n\log(u\overline u)+m\log(v\overline v)}
\end{displaymath}
This function satisfies the following properties.
\begin{enumerate}[(i)]
\item \label{item:1} The function $f_{n,m}$ is a pre-log-log function along
  $D\setminus \{(0,0)\}$.
\item \label{item:2} The equality  
  $\partial\overline \partial f_{n,m}\land \partial\overline \partial
  f_{n,m}=0$ holds. The differential forms $f_{n,m}$, $\partial
  f_{n,m}$, $\overline \partial
  f_{n,m}$, and $\partial\overline \partial f_{n,m}$ and all the
  products between them are locally integrable.  Moreover, any product
  of $\partial\overline \partial f_{n,m}$ with a 
  pre-log-log form along $D$ is also locally integrable.
\item \label{item:3} Let $\pi \colon U_{n,n+m}\to U_{n,m}$ be the map given by
  $(s,t)\mapsto (st,t)$. Note that $U_{n,n+m}$ is a chart of the
  blow-up of $U_{n,m}$ along $(0,0)$. Then
  \begin{displaymath}
    \pi ^{\ast}f_{n,m}(s,t)=\frac{1}{nm(n+m)}\log (t\overline
    t)+f_{n,n+m}(s,t). 
  \end{displaymath}
\end{enumerate}
\end{proposition}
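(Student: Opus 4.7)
My overall strategy is to exploit the fact that $f_{n,m}$ is a function of the two quantities $a=\log(u\bar u)$ and $b=\log(v\bar v)$ only, so that all three parts reduce to calculus for $h(a,b):=ab/(nm(na+mb))$. Setting $k=na+mb$, a one-time computation gives
\begin{align*}
h_a&=\frac{b^{2}}{nk^{2}},\qquad h_b=\frac{a^{2}}{mk^{2}},\\
h_{aa}&=-\frac{2b^{2}}{k^{3}},\qquad h_{bb}=-\frac{2a^{2}}{k^{3}},\qquad h_{ab}=\frac{2ab}{k^{3}},
\end{align*}
and in particular the Monge--Amp\`ere relation $h_{aa}h_{bb}=4a^{2}b^{2}/k^{6}=h_{ab}^{2}$.

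For \eqref{item:3} I would just substitute $u=st,\ v=t$. Writing $A=\log(s\bar s)$, $B=\log(t\bar t)$, the pullback becomes $\frac{(A+B)B}{nm(nA+(n+m)B)}$; clearing against the proposed right-hand side $\frac{B}{nm(n+m)}+\frac{AB}{n(n+m)(nA+(n+m)B)}$ reduces to the identity $(n+m)(AB+B^{2})=B(nA+(n+m)B)+mAB$, which is immediate.

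For \eqref{item:2} I would use $\partial a=du/u$, $\bar\partial a=d\bar u/\bar u$, and similarly for $b$, to write
\[
\partial\bar\partial f=h_{aa}\,\omega_{1}+h_{bb}\,\omega_{2}+h_{ab}(\omega_{3}+\omega_{4}),
\]
where $\omega_{1}=\frac{du\wedge d\bar u}{u\bar u}$, $\omega_{2}=\frac{dv\wedge d\bar v}{v\bar v}$, $\omega_{3}=\frac{du\wedge d\bar v}{u\bar v}$, $\omega_{4}=\frac{dv\wedge d\bar u}{v\bar u}$. Squaring, the only surviving products are $\omega_{1}\wedge\omega_{2}=\Omega$ and $\omega_{3}\wedge\omega_{4}=-\Omega$ (with $\Omega=\frac{du\wedge d\bar u\wedge dv\wedge d\bar v}{u\bar uv\bar v}$), so
\[
\partial\bar\partial f\wedge\partial\bar\partial f = 2(h_{aa}h_{bb}-h_{ab}^{2})\,\Omega=0.
\]
For local integrability I would switch to log-polar coordinates $\alpha=-\log|u|,\ \beta=-\log|v|$: the coefficients of $f,\partial f,\bar\partial f,\partial\bar\partial f$ acquire the form $P(\alpha,\beta)/(n\alpha+m\beta)^{N}$ for a polynomial $P$ of degree $\le N$, times inverse powers of $|u|$ and $|v|$. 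After multiplying against the Lebesgue measure, these inverse powers cancel and one picks up an $e^{-2\alpha-2\beta}\,d\alpha\,d\beta$ factor, which dominates any $P(\alpha,\beta)$ both near the corner $(0,0)$ and along the open branches of $D$. The same exponential decay shows that the coefficients of $\partial\bar\partial f$ have polynomial growth in the log-polar picture and therefore remain integrable when multiplied by any pre-log-log form, whose coefficients grow only polylogarithmically.

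For \eqref{item:1} I restrict to a point of $D\setminus\{(0,0)\}$, say with $u=0$ and $v\neq 0$; the other branch is symmetric. There $b$ is bounded so $k=na+mb=na(1+O(1/a))$, and the formulas above give $h=b/(n^{2}m)+O(1/a)$, $h_a=O(1/a^{2})$, $h_b=O(1)$, $h_{aa}=O(1/a^{3})$, $h_{ab}=O(1/a^{2})$, $h_{bb}=O(1/a)$. Hence $f$ itself is bounded, and
\[
\partial f=\bigl(h_a\log|u|^{-1}\bigr)\frac{du}{u\log|u|^{-1}}+h_b\,\frac{dv}{v},
\]
where the first coefficient is $O(1/\log|u|^{-1})$ (hence log-log growth) and the second is smooth because $v$ is bounded away from $0$; so $\partial f$ lies in $\mathscr E^{*}_{X}\langle\langle D\rangle\rangle_{\w}$. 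The same rewriting works for $\bar\partial f$ and $\partial\bar\partial f$, giving the pre-log-log statement.

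The main obstacle I anticipate is the integrability claim of \eqref{item:2} near the origin, since individual pieces like $h_a^{2}\,\omega_{1}$ and $h_{aa}\,\omega_{1}$ carry the non-integrable factor $1/|u|^{2}$; one must verify that after genuinely passing to log-polar coordinates, the measure factor $e^{-2\alpha-2\beta}\,d\alpha\,d\beta$ beats the worst regime $\alpha\asymp\beta\to\infty$ uniformly, so that the integrals converge both near $(0,0)$ and along the smooth branches of~$D$.
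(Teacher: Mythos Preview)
Your approach to parts \eqref{item:1} and \eqref{item:3} is correct and essentially the same as the paper's. For the vanishing $(\partial\bar\partial f)^{2}=0$ you use the Hessian identity $h_{aa}h_{bb}=h_{ab}^{2}$, whereas the paper observes directly that $\partial\bar\partial f_{n,m}=\frac{2}{P_{n,m}^{3}}(b-a)\wedge(\bar a-\bar b)$ is a wedge of a $(1,0)$-form with a $(0,1)$-form, so its square is automatically zero; both are fine and amount to the same linear algebra.

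The genuine gap is in your integrability argument near the origin, precisely the point you yourself flag. Your claim that ``after multiplying against the Lebesgue measure, these inverse powers cancel and one picks up an $e^{-2\alpha-2\beta}\,d\alpha\,d\beta$ factor'' is false in the critical case. Take the term $h_{aa}\,\omega_{1}$ of $\partial\bar\partial f$ and wedge it with a pre-log-log form of the shape $\psi\,\omega_{2}$ with $\psi=O\big((\log\log)^{M}/\beta^{2}\big)$; the result is $h_{aa}\psi\,\Omega$ with $\Omega=\omega_{1}\wedge\omega_{2}=\dfrac{du\wedge d\bar u\wedge dv\wedge d\bar v}{u\bar u\,v\bar v}$. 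In your log-polar coordinates $\Omega$ becomes a constant multiple of $d\alpha\,d\theta_{u}\,d\beta\,d\theta_{v}$ with \emph{no} exponential factor at all: the $e^{2\alpha+2\beta}$ coming from $1/(u\bar u\,v\bar v)$ exactly kills the $e^{-2\alpha-2\beta}$ from the Lebesgue measure. The integrand reduces, up to constants, to
\[
\frac{(\log\alpha\cdot\log\beta)^{M}}{(n\alpha+m\beta)^{3}}\,d\alpha\,d\beta,
\]
and convergence now genuinely depends on extracting separate decay in $\alpha$ and $\beta$ from the single denominator $(n\alpha+m\beta)^{3}$. The paper does this with the arithmetic--geometric mean inequality $n\alpha+m\beta\ge 2\sqrt{nm\,\alpha\beta}$, giving $(n\alpha+m\beta)^{3}\ge C(\alpha\beta)^{3/2}$ and hence a bound $\dfrac{C'}{(\alpha\beta)^{1+\varepsilon}}$ after absorbing the polylogarithmic numerator. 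That step, not exponential decay, is what makes the integral converge; once you insert it your outline goes through.
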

\begin{proof}
  Put
  \begin{gather*}
    P_{n,m}(u,v)=n\log (u\overline u) + m\log (v\overline v) \\
    a=\log (v\overline v) \frac{\dd u}{u},\qquad b=\log (u\overline u)
    \frac{\dd v}{v}.
  \end{gather*}
With these notations, we have
\begin{align}
  \partial f_{n,m}&=\frac{1}{nm P_{n,m}^{2}}(n\log(u\overline
  u)b+m\log(v\overline v)a),\label{eq:4}\\
  \partial\overline \partial f_{n,m}&=\frac{2}{P_{n,m}^{3}}
  (b-a)\land (\overline a-\overline b),\label{eq:5}\\
  \partial f_{n,m}\land \partial\overline \partial f_{n,m}&=
  \frac{2}{nmP_{n,m}^{4}}a\land b\land(\overline a-\overline b).\label{eq:10}
\end{align}
From equation \eqref{eq:5}, it follows that $
  \partial\overline \partial f_{n,m}\land \partial\overline \partial
  f_{n,m}=0.$

We now prove \eqref{item:1}.
Let $p=(0,v_{0})\in D\setminus \{(0,0)\}$. Let $U$ be a neighborhood
of $p$ such that  $|\log(v\overline v)|\le K$, $u\overline u< 1$ and
\begin{displaymath}
  n|\log(u\overline u)|\ge 2mK\ge 2m |\log(v\overline v)|,
\end{displaymath}
for some positive constant $K$. Therefore, on all the points of $U$,
the estimate
\begin{displaymath}
  |P_{n,m}|\ge \frac{n}{2}|\log (u\overline u)|
\end{displaymath}
holds. Then, for $(u,v)\in U$,
\begin{equation}\label{eq:2}
  |f_{n,m}(u,v)|\le \frac{2}{n^{2}m}\frac{| \log (u\overline u)\log
    (v\overline v)|}{| \log (u\overline u)|}\le \frac{2K}{n^{2}m}.
\end{equation}
Similarly, if $t_{1}$ and $t_{2}$ are smooth tangent vectors on $U$
with bounded coefficients, from \eqref{eq:4} and \eqref{eq:5}, we derive
\begin{align}
  \label{eq:3}
  |\partial f_{n,m}(t_{1})|&\le \frac{C_{1}}{|\log(u\overline
    u)|^{2}|u|}\\
  \label{eq:6}
  |\partial \overline \partial f_{n,m}(t_{1},t_{2})|&\le 
  \frac{C_{2}}{|\log(u\overline
    u)|^{3}|u|^{2}}
\end{align}
for suitable positive constants $C_{1}$ and $C_{2}$. The estimates
\eqref{eq:2}, \eqref{eq:3} and \eqref{eq:6} show that $f_{n,m}$ is a
pre-log-log function. Thus we have proved \eqref{item:1}.

Since pre-log-log forms are always locally integrable
(cf. \cite[Proposition 7.6]{BurgosKramerKuehn:cacg}), in order to check
\eqref{item:2}, it is only necessary to study a neighborhood of the
point $(0,0)$. Thus we restrict ourselves to the open $V$ defined by
$|u|<1/e$ and $|v|<1/e$.

We show the local integrability of a form of the type
$\partial\overline \partial f_{n,m}\land \varphi$
for a pre-log-log form $\varphi$, being the other cases analogous. 

By the definition of pre-log-log forms, equation \eqref{eq:5} shows
that $\partial\overline \partial f_{n,m}\land \varphi=g(u,v)\dd u\land \dd
\overline u\land \dd v\land \dd
\overline v$, with  $g$ a function satisfying
\begin{displaymath}
  |g(u,v)|\le C_{1}\frac{ |{\log (\log (u\overline u))\log (\log
    (v\overline v))}|^{M}}{|P_{n,m}^{3}u\overline u v\overline v|}.  
\end{displaymath}
for certain positive constants $C_{1}$ and $M$. Using the geometric
vs. arithmetic 
mean inequality, and the fact that the logarithm grows slower than any
polynomial, we see that $g$ can be bounded as
\begin{displaymath}
    |g(u,v)|\le \frac{C_{2} }{|\log( u\overline u)\log( v\overline
      v)|^{1+\varepsilon } u\overline u v\overline v},  
\end{displaymath}
with $C_{2}$ and $\varepsilon $ positive. Since the differential form
\begin{displaymath}
  \frac{\dd u\land \dd
\overline u\land \dd v\land \dd
\overline v}{|\log( u\overline u)\log( v\overline
      v)|^{1+\varepsilon } u\overline u v\overline v}
\end{displaymath}
is locally integrable, we deduce that $\partial\overline \partial
f_{n,m}\land \varphi$ is locally integrable. 

Every product between a smooth form and any of the forms $f_{n,m}$,
$\partial f_{n,m}$,  
$\overline \partial
f_{n,m}$, and $\partial\overline \partial f_{n,m}$, will satisfy
growth estimates 
not worse than the one satisfied by 
$\partial\overline \partial f_{n,m}\land \varphi$, except the
product $\partial\overline \partial
f_{n,m}\land \partial\overline \partial f_{n,m}$. Since this last
product is zero we conclude \eqref{item:2}.

The statement \eqref{item:3} follows from a direct computation.
\end{proof}

\nnpar{The Mumford-Lear extension of the line bundle of
  Jacobi forms to $E(N)$.} 
We now denote by $D=E(N)\setminus E^{0}(N)$. This is a normal
crossings divisor. Let $\Sigma $ be the set of double points of
$D$ and put $D^{0}=D\setminus \Sigma $ for the smooth part of $D$. Let
$H$ be the divisor of $E(N)$ defined as the image of the zero section 
$X(N) \to E(N)$.

Consider the divisor on $E(N)$ given by
\begin{equation}\label{eq:7}
  C=8H+\sum\limits_{j=1}^{p_{N}}
    \sum\limits_{\nu=0}^{N-1}\bigg(N-4\nu+\frac{4\nu ^{2}}{N}\bigg)\Theta_{j,\nu},
\end{equation}
  Choose
  a smooth hermitian metric $\|\cdot\|'$ on $\mathcal{O}(C)$ and let $s$ be a
  section of $\mathcal{O}(C)$ with $\dv s=C$.

\begin{proposition}\label{prop:2} The hermitian line bundle $\overline
  L=(L_{4,4,N},\|\cdot\|)$  satisfies the following properties.
  \begin{enumerate}[(i)]
  \item \label{item:4} The restriction of the metric $\|\cdot\|$ to
    $E^{0}(N)$ is
  smooth. Moreover the divisor of the restriction of $\theta
  _{1,1}^{8}$ to $E^{0}(N)$ is $8H$. Therefore, there is a unique
  isomorphism $\alpha \colon L_{4,4,N}\to
  \mathcal{O}(C)\mid_{E^{0}(N)}$ that sends $\theta _{1,1}^{8} $ to
  $s$.
\item \label{item:5} Each point $p$ belonging to only one component
  $\Theta _{j,\nu
  }$ has a neighborhood $V$ on which
  \begin{displaymath}
    \log\|\theta _{1,1}^{8}\|^{2}=
    \log{\|s\|'}^{2}+\varphi_{1},
  \end{displaymath}
  where $\varphi_{1}$ is a pre-log-log along $D^{0}$.
\item \label{item:6} On the affine coordinate chart $W^{0}_{j,\nu }$
  defined on Section \ref{sec:ellsurface}, we can write
  \begin{displaymath}
    \log\|\theta _{1,1}^{8}\|^{2}=
    \log{\|s\|'}^{2}+\varphi_{2}-\frac{4}{N}
    \frac{\log(u_{\nu}\bar{u}_{\nu})\log(v_{\nu}\bar{v}_{\nu})}
    {\log(u_{\nu}\bar{u}_{\nu})+\log(v_{\nu}\bar{v}_{\nu})},  
  \end{displaymath}
  where $\varphi_{2}$ is pre-log-log along $D$.
  \end{enumerate}
  In consequence, if we denote also by $\|\cdot\|$ the singular metric
  on $\mathcal{O}(C)$ induced by $\alpha $ and $\|\cdot\|$, then the
  5-tuple $(1,\mathcal{O}(C),\Sigma 
  ,\|\cdot\|,\alpha )$ is a Mumford-Lear 
  extension of the hermitian line bundle $\overline
  L$ to $E(N)$ and the divisor of $\theta_{1,1}^{8}$
  on the universal elliptic surface $E(N)$ is given by
  \begin{align*}
    \dv_{E(N)}(\theta_{1,1}^{8})=C.
  \end{align*}
\end{proposition}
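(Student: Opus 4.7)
I would prove (i) directly, then (iii) by computing the local expansion of $\theta_{1,1}^{8}$ at the cusp, then (ii) as a consequence of (iii) together with Proposition \ref{prop:3}(i), and finally the Mumford-Lear statement from (ii), (iii), and the divisor calculation. For (i), smoothness of $\|\cdot\|$ on $E^{0}(N)$ is immediate from $\|f\|^{2}=|f|^{2}\exp(-16\pi y^{2}/\eta)\eta^{4}$ since $\eta>0$ on $\mathbb{H}$. The Jacobi triple product
\begin{displaymath}
\theta_{1,1}(\tau,z)=iq^{1/8}\zeta^{-1/2}(\zeta-1)\prod_{n\geq 1}(1-q^{n})(1-q^{n}\zeta)(1-q^{n}\zeta^{-1})
\end{displaymath}
shows $\theta_{1,1}$ has simple zeros along the $\Gamma(N)\ltimes\mathbb{Z}^{2}$-orbit of $z=0$, which descends to the zero section $H$. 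Hence $\dv(\theta_{1,1}^{8}|_{E^{0}(N)})=8H$, and $\alpha$ is uniquely determined by $\theta_{1,1}^{8}\mapsto s$.

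For (iii), I substitute $q=(u_{\nu}v_{\nu})^{N}$ and $\zeta=u_{\nu}^{\nu+1}v_{\nu}^{\nu}$ into the triple product. Every factor in the infinite product becomes $(1-u_{\nu}^{a}v_{\nu}^{b})$ with $a,b\geq 0$ (since $Nn-\nu-1\geq 0$ for $n\geq 1$ and $0\leq\nu\leq N-1$), so these factors assemble into a holomorphic function $h$ on a neighborhood of the corner with $h(0,0)=1$. Raising to the eighth power yields
\begin{displaymath}
\theta_{1,1}^{8}=u_{\nu}^{N-4(\nu+1)}\,v_{\nu}^{N-4\nu}\,(\zeta-1)^{8}\,h(u_{\nu},v_{\nu}).
\end{displaymath}
Applying Lemma \ref{metriclocal} with $k=m=4$ to $\log\|\theta_{1,1}^{8}\|^{2}$, the coefficients of $\log(u_{\nu}\bar{u}_{\nu})$ and $\log(v_{\nu}\bar{v}_{\nu})$ assemble to
\begin{displaymath}
N-4(\nu+1)+\tfrac{4(\nu+1)^{2}}{N}=\tfrac{(N-2(\nu+1))^{2}}{N},\qquad N-4\nu+\tfrac{4\nu^{2}}{N}=\tfrac{(N-2\nu)^{2}}{N},
\end{displaymath}
which are precisely the coefficients of $\Theta_{j,\nu+1}$ and $\Theta_{j,\nu}$ in $C$; the $8\log|\zeta-1|^{2}$ contribution furnishes the $8H$ part. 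The residual terms are the smooth $\log|h|^{2}$, a smooth local comparison with $\|s\|'$, and the pre-log-log term $4\log(-\tfrac{N}{4\pi}(\log(u_{\nu}\bar{u}_{\nu})+\log(v_{\nu}\bar{v}_{\nu})))$; these assemble into $\varphi_{2}$. The singled-out summand in (iii) is then the $f_{1,1}$-type term of the metric, which is the only piece that fails to be pre-log-log at the corner.

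For (ii), at a point of $\Theta_{j,\nu}\setminus\Sigma$ one of $u_{\nu},v_{\nu}$ is bounded away from zero, so by Proposition \ref{prop:3}(i) the $f_{1,1}$-type summand is itself pre-log-log along $D^{0}$; absorbing it into $\varphi_{2}$ gives $\varphi_{1}$. The Mumford-Lear conclusion then follows because pre-log-log functions are of logarithmic growth, so the transported metric on $\mathcal{O}(C)|_{U}$ has logarithmic growth along $D\setminus\Sigma$; the set $\Sigma$ has codimension two in $E(N)$; and $\dv_{E(N)}(\theta_{1,1}^{8})=C$ follows from Definition \ref{def:4} and the coefficient calculation above. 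The main obstacle I anticipate is the bookkeeping in (iii): tracing the fractional exponents of $q^{1/8}\zeta^{-1/2}$ through the substitution, verifying that the metric corrections from Lemma \ref{metriclocal} lift the integer orders $N-4(\nu+1),N-4\nu$ of $\theta_{1,1}^{8}$ precisely to the rational coefficients $(N-2(\nu+1))^{2}/N,(N-2\nu)^{2}/N$ of $C$, and extending the argument from $W^{0}_{1,\nu}$ to the charts $W^{1}_{j,\nu}$ and to the cusps $P_{j}\neq P_{1}$ (the latter via transitivity of $\mathrm{SL}_{2}(\mathbb{Z})/\Gamma(N)$ on cusps).
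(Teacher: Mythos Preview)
Your proposal is correct and follows essentially the same architecture as the paper: establish (i) directly, obtain (iii) from Lemma~\ref{metriclocal} by computing the vanishing orders of $\theta_{1,1}^{8}$ along the boundary components, deduce (ii) from (iii) via Proposition~\ref{prop:3}\eqref{item:1}, and conclude the Mumford--Lear statement from the divisor identification. The reduction to the cusp $P_{1}$ by transitivity of $\mathrm{SL}_{2}(\mathbb{Z})/\Gamma(N)$ is exactly what the paper does.

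The one substantive variation is in how you compute the order of $\theta_{1,1}^{8}$ along $\Theta_{j,\nu}$. The paper rewrites the defining series \eqref{theta} in $u_{\nu},v_{\nu}$ and minimizes the exponent of $v_{\nu}$ over $n\in\mathbb{Z}$, expressing the answer through the fractional-part function $\epsilon(-\nu/N)$ and then simplifying to the coefficient $N-4\nu+\tfrac{4\nu^{2}}{N}$ after adding the metric term. You instead invoke the Jacobi triple product, which factors out the monomial $q^{1/8}\zeta^{-1/2}$ directly and leaves an infinite product that is visibly a unit at the corner once one checks $Nn-\nu-1\ge 0$ for $n\ge 1$. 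Both routes yield the same integer orders $N-4(\nu+1)$ and $N-4\nu$ for $\theta_{1,1}^{8}$, and hence the same rational coefficients $\tfrac{(N-2(\nu+1))^{2}}{N}$, $\tfrac{(N-2\nu)^{2}}{N}$ in $C$ after the metric contribution from Lemma~\ref{metriclocal}. Your product argument is a bit cleaner in that it bypasses the $\epsilon$--bookkeeping; the paper's series argument has the mild advantage of using only the definition \eqref{theta} already introduced in the text. The ``obstacle'' you flag (tracing fractional exponents, matching coefficients, handling $W^{1}_{j,\nu}$ and other cusps) is real but routine, and the paper's proof faces and dispatches the same issues.
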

\begin{proof} The metric $\|\cdot\|$ on $L_{4,4,N}$ over the open
  subset $E^{0}(N)$ is induced by a
  smooth metric on the trivial line bundle over
  $\mathbb{H}\times\mathbb{C}$. Since $N\ge 3$, the map
  $\mathbb{H}\times\mathbb{C}\to E^{0}(N)$ is \'etale. Hence, the metric
  $\|\cdot\|$ 
  is smooth on $E^{0}(N)$. Therefore,
  the components of $\dv_{E(N)}(\theta _{1,1}^{8})$ that meet
  the open subset $E^{0}(N)$ come from the theta function. Is well
  known that, for fixed $\tau\in\mathbb{H}$, the zeros of the
  Riemann theta 
  function $\theta_ {1,1}(\tau,z)$ are located at
  $z\in\mathbb{Z}\tau\oplus\mathbb 
  {Z}$; all the zeros are simple. This proves that the restriction of
  $\dv_{E(N)}(\theta _{1,1}^{8})$ to $E^{0}(N)$ is given by
  $8H$. This finishes the proof of \eqref{item:4}.

  By the normality of the group $\Gamma(N)$ in $\mathrm{SL}_{2}
  (\mathbb{Z})$, in order to prove that $\overline L$ admits a
  Mumford-Lear extension  and compute the divisor
  $\dv_{E(N)}(\theta_{1,1}^{8})$, it suffices to work over the 
  cusp $P_{1}=  [\infty]$.

  Consider the open affine chart $W=W_{1,\nu}^0$. By Lemma \ref{metriclocal}, 
  \begin{align*}
    &\log\big(\Vert
    \theta _{1,1}^{8}(\tau,z)\Vert^{2}\big)\Big\vert_{W_{\nu}^0}=
    \log\big(\vert \theta _{1,1}^{8}(\tau,z)
    \vert^{2}\big)\Big\vert_{W} \\
    &\hspace*{5mm}+\frac{4}{N}\bigg((\nu+1)^{2}\log(u_{\nu}\bar{u}_{\nu})+\nu^
    {2}\log(v_{\nu}\bar{v}_{\nu})-\frac{\log(u_{\nu}\bar{u}_{\nu})
      \log(v_{\nu}\bar{v}_
      {\nu})}{\log(u_{\nu}\bar{u}_{\nu})+\log(v_{\nu}\bar{v}_{\nu})}\bigg) \\
    &\hspace*{5mm}+4\log\bigg(-\frac{N}{4\pi}\big(\log(u_{\nu}\bar{u}_{\nu})+\log
    (v_{\nu}\bar{v}_{\nu})\big)\bigg).
  \end{align*}
  
  We first study the term $\log|\theta
  _{1,1}^{8}|^{2}$.  For this, we rewrite expression \eqref{theta} defining
  $\theta_{1,1}$ in 
  terms of the local coordinates $u_{\nu},v_{\nu}$. Using formulas
  \eqref{local}, we obtain
  \begin{align*}
    \theta_{1,1}(\tau,z)&=\sum\limits_{n\in\mathbb{Z}}\mathrm{e}^{\pi i(n+1/2)}q_
    {N}^{N/2(n+1/2)^{2}}\zeta^{n+1/2)} \\[1mm]
    &=\sum\limits_{n\in\mathbb{Z}}\mathrm{e}^{\pi(n+1/2)}u_{\nu}^{N/2(n+1/2)^{2}+
      (\nu+1)(n+1/2)}v_{\nu}^{N/2(n+1/2)^{2}+\nu(n+1/2)}.
  \end{align*}
  Since the vertical component $\Theta_{1,\nu}$ is characterized by the equation
  $v_{\nu}=0$, the multiplicity of $\theta_ {1,1}$ along
  $\Theta_{1,\nu}$ is given by 
  \begin{displaymath}
    \min\limits_{n\in\mathbb{Z}}\bigg(\frac{N}{2}n^{2}+
    \bigg(\frac{N}{2}+\nu\bigg)n+ 
    \frac{N}{8}+\frac{\nu}{2}\bigg).
  \end{displaymath}
  For a real number $x$ we write $\lfloor x \rfloor$ for the bigger
  integer smaller or equal to $x$ and $\epsilon (x)=x-\lfloor x
  \rfloor$. Then one easily checks that 
  \begin{multline*}
    \min\limits_{n\in\mathbb{Z}}\bigg(\frac{N}{2}n^{2}+
    \bigg(\frac{N}{2}+\nu\bigg)n+ 
    \frac{N}{8}+\frac{\nu}{2}\bigg)\\=
    \frac{N}{2}\left(\epsilon^{2} \left(-\frac{\nu }{N}\right)-
      \epsilon \left(-\frac{\nu
        }{N}\right)\right)+\frac{N}{8}-\frac{\nu ^{2}}{2N}.
  \end{multline*}
  Note that this quantity depends on the value of $\nu $ and not just
  on the residue class of $\nu $ modulo $N$. This is because
  $\theta_{1,1}(\tau,z)$ is a multi-valued function on $E^{0}(N)$.
  
  Similarly, the multiplicity of $\theta_ {1,1}$ along
  $\Theta_{1,\nu+1}$ is given by
  \begin{displaymath}
    \frac{N}{2}\left(\epsilon^{2} \left(-\frac{\nu+1 }{N}\right)-
      \epsilon \left(-\frac{\nu+1 }{N}\right)\right)+\frac{N}{8}
    -\frac{(\nu+1) ^{2}}{2N}.
  \end{displaymath}
  Therefore, on $W\setminus H$, we can write
  \begin{align*}
    \log&|\theta_{1,1}^{8}|^{2}=\\
    &\left(
    4N\left(\epsilon^{2} \left(-\frac{\nu+1 }{N}\right)-
      \epsilon \left(-\frac{\nu+1 }{N}\right)\right)+N-\frac{4(\nu+1)
      ^{2}}{N}\right)\log u_{\nu 
  } \overline u_{\nu 
    }+\\&
    \left(
    4N\left(\epsilon^{2} \left(-\frac{\nu }{N}\right)-
      \epsilon \left(-\frac{\nu }{N}\right)\right)+N-\frac{4\nu
      ^{2}}{N}\right)\log v_{\nu
  } \overline v_{\nu 
    }+
\varphi_{3}, 
  \end{align*}
  where $\varphi_{3}$ is a smooth function.

  We next consider the remaining terms of the expresion of $\log\|\theta
  _{1,1}^{8}\|^{2}$. The term
  \begin{displaymath}
    4\log\bigg(-\frac{N}{4\pi}\big(\log(u_{\nu}\bar{u}_{\nu})+\log
    (v_{\nu}\bar{v}_{\nu})\big)\bigg)
  \end{displaymath}
  is pre-log-log along $D$.

The terms $\frac{4\nu ^{2}}{N}\log (v_{\nu}\bar{v}_{\nu})$
and $\frac{4(\nu+1) ^{2}}{N}\log (u_{\nu}\bar{u}_{\nu})$ add
$4\nu ^{2}/N$ and $4(\nu+1) ^{2}/N$ to the multiplicity of the
components $\Theta_{1,\nu}$ and $\Theta_{1,\nu+1}$
respectively. Summing up, we obtain that
\begin{align*}
  \log\big(\Vert
    \theta _{1,1}^{8}&(\tau,z)\Vert^{2}\big)\Big\vert_{W\setminus H}=\\&
\left(4N\left(\epsilon^{2} \left(-\frac{\nu }{N}\right)-
      \epsilon \left(-\frac{\nu }{N}\right)\right)+N\right)\log v_{\nu
  } \overline v_{\nu 
    }+\\&
\left(
    4N\left(\epsilon^{2} \left(-\frac{\nu+1 }{N}\right)-
      \epsilon \left(-\frac{\nu+1 }{N}\right)\right)+N\right)\log u_{\nu
  } \overline u_{\nu 
    }-\\&\frac{4}{N}\bigg(\frac{\log(u_{\nu}\bar{u}_{\nu})\log(v_{\nu}\bar{v}_  
      {\nu})}{\log(u_{\nu}\bar{u}_{\nu})+\log(v_{\nu}\bar{v}_{\nu})}\bigg)+
\varphi_{2},
\end{align*}
where $\varphi_{2}$ is pre-log-log along $D$.

In order to finish the proof of \eqref{item:6}, it only remains to
observe that, for $\nu =0,\dots,N$,
\begin{displaymath}
  4N\left(\epsilon^{2} \left(-\frac{\nu }{N}\right)-
      \epsilon \left(-\frac{\nu }{N}\right)\right)=
    \frac{4\nu ^{2}}{N}-4\nu.
\end{displaymath}

Statement \eqref{item:5} follows from \eqref{item:6} and the fact that,
  by Proposition \ref{prop:3}~\eqref{item:1}, the term
  \begin{displaymath}
    \frac{4}{N}\bigg(-\frac{\log(u_{\nu}\bar{u}_{\nu})\log(v_{\nu}\bar{v}_  
      {\nu})}{\log(u_{\nu}\bar{u}_{\nu})+\log(v_{\nu}\bar{v}_{\nu})}\bigg)
\end{displaymath}
is pre-log-log along $D^{0}$. 
\end{proof}

\nnpar{The self-intersection of $C$.} By Proposition \ref{prop:2}, the
Mumford-Lear extension of $\overline 
L$ to $E(N)$ is isomorphic to $\mathcal{O}(C)$. We  next
compute the self-intersection $C\cdot C$.

\begin{proposition}
  The self intersection product $C\cdot C$ is given by
  \begin{displaymath}
    C\cdot C=\frac{16(N^2+1)p_{N}}{3N}.
  \end{displaymath}
  In particular, for $N=4$, we have $p_{4}=6$, hence $C\cdot C=136$.
\end{proposition}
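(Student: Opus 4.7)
The plan is to expand $C\cdot C$ bilinearly using the decomposition $C = 8H + V$, where $V = \sum_{j=1}^{p_{N}} V_{j}$ and $V_{j} := \sum_{\nu=0}^{N-1} a_{\nu}\,\Theta_{j,\nu}$ with $a_{\nu} := N - 4\nu + 4\nu^{2}/N$. This gives
$$C\cdot C = 64\,H^{2} + 16\,H\cdot V + V\cdot V,$$
and I would handle the three pieces separately. Since different cusps have disjoint fibers, $V\cdot V = \sum_{j} V_{j}\cdot V_{j}$, so everything reduces to standard intersection data on the universal elliptic surface.

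For the first term, since $\pi\colon E(N)\to X(N)$ is a smooth minimal elliptic surface with section $H = \varepsilon(X(N))$, the adjunction formula combined with the canonical bundle formula for elliptic surfaces yields the classical identity $H^{2} = -\chi(\mathcal{O}_{E(N)})$. Using the value $p_{\mathrm{a},N} = Np_{N}/12 - 1$ recalled in Section~\ref{sec:ellsurface}, we obtain $\chi(\mathcal{O}_{E(N)}) = p_{\mathrm{a},N} + 1 = Np_{N}/12$, hence $H\cdot H = -Np_{N}/12$. For the cross term, the zero section meets each singular fiber transversally at a single smooth point, namely the identity component (which we label $\Theta_{j,0}$). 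Indeed, using the local equations \eqref{local} the closure of $H$ in $W^{0}_{j,\nu}$ is cut out by $u_{\nu}^{\nu+1}v_{\nu}^{\nu} = 1$, which meets $\{v_{\nu}=0\}$ only when $\nu = 0$. Therefore $H\cdot \Theta_{j,\nu} = \delta_{\nu,0}$, so $H\cdot V = \sum_{j=1}^{p_{N}} a_{0} = Np_{N}$.

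For the vertical part, using $\Theta_{j,\nu}^{2} = -2$ and $\Theta_{j,\nu}\cdot\Theta_{j,\nu\pm 1} = 1$ cyclically, one obtains the telescoping identity
$$V_{j}\cdot V_{j} = -2\sum_{\nu=0}^{N-1} a_{\nu}^{2} + 2\sum_{\nu=0}^{N-1} a_{\nu}\,a_{\nu+1} = -\sum_{\nu=0}^{N-1}(a_{\nu} - a_{\nu+1})^{2},$$
where indices are read modulo $N$. A direct calculation gives $a_{\nu} - a_{\nu+1} = 4(N - 2\nu - 1)/N$ for every $\nu \in \mathbb{Z}/N\mathbb{Z}$ (one has to check the boundary case $\nu = N-1$ separately and observe that the cyclic formula still holds). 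The elementary identity $\sum_{\nu=0}^{N-1}(N - 2\nu - 1)^{2} = N(N^{2}-1)/3$ then yields
$$\sum_{\nu=0}^{N-1}(a_{\nu} - a_{\nu+1})^{2} = \frac{16(N^{2}-1)}{3N}.$$

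Assembling the three contributions,
$$C\cdot C = -\frac{64\,Np_{N}}{12} + 16\,Np_{N} - p_{N}\cdot\frac{16(N^{2}-1)}{3N} = \frac{16(N^{2}+1)\,p_{N}}{3N},$$
and specializing to $N = 4$, $p_{4} = 6$ gives $C\cdot C = 136$. No step presents a real obstacle; the only point requiring care is the transversality analysis showing $H\cdot \Theta_{j,\nu} = \delta_{\nu,0}$ via the local equation $u_{\nu}^{\nu+1}v_{\nu}^{\nu} = 1$, and the invocation of $H^{2} = -\chi(\mathcal{O}_{E(N)})$ for an elliptic surface with section.
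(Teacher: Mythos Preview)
Your proposal is correct and follows essentially the same approach as the paper: both compute $C\cdot C$ from the basic intersection data $H\cdot H=-Np_{N}/12$, $H\cdot\Theta_{j,\nu}=\delta_{\nu,0}$, and the $(-2)$-cycle intersection matrix of the $\Theta_{j,\nu}$. The paper simply states these numbers (citing the adjunction formula for $H^{2}$) and declares that the result follows, whereas you supply the explicit bilinear expansion, the telescoping identity $V_{j}^{2}=-\sum_{\nu}(a_{\nu}-a_{\nu+1})^{2}$, and a local-coordinate justification of $H\cdot\Theta_{j,\nu}=\delta_{\nu,0}$; these are welcome details but not a different strategy.
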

\begin{proof}
Using the adjunction formula (see for instance
\cite{Kramer:jacobi} proof of Proposition 3.2), we obtain
\begin{displaymath}
  H\cdot H=-\frac{Np_{N}}{12}. 
\end{displaymath}
Moreover
\begin{displaymath}
  H\cdot \Theta_{j,\nu }=
  \begin{cases}
    1,&\text{ if }\nu =0,\\
    0,&\text{ otherwise,}
  \end{cases}
\end{displaymath}
and
\begin{displaymath}
  \Theta_{j,\nu }\cdot
  \Theta_{j',\nu' }=
  \begin{cases}
    -2,&\text{ if }j=j',\ \nu =\nu '\\
    1,&\text{ if }j=j', \ \nu\equiv \nu'\pm1 \mod N\\
    0,&\text{ otherwise.}
  \end{cases}
\end{displaymath}
From these intersection products and the explicit description of $C$
in \eqref{eq:7}, we derive the result.
\end{proof}

\nnpar{The b-divisor of the line bundle of Jacobi forms.}

\begin{theorem}\label{thm:1}
  The line bundle $\overline L=(L_{4,4,N},\|\cdot\|)$ admits all
  Mumford-Lear extensions over $E(N)$. Moreover the associated 
  b-divisor is integrable and the equality
  \begin{displaymath}
    \bdv(\theta _{1,1}^{8})\cdot \bdv(\theta
    _{1,1}^{8})=\frac{16 N p_N}{3}
  \end{displaymath}
  holds.
\end{theorem}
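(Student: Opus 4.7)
The plan is to (i) construct a Mumford-Lear extension on every $Y\in\Bir'(E(N))$ by iteratively resolving the nodes of $D$ using Proposition~\ref{prop:3}(iii), (ii) express each $\dv_Y(\theta_{1,1}^{8})$ as the pullback of the divisor from the previous model corrected by a rational multiple of the new exceptional, and (iii) sum the decrements of the self-intersection along a cofinal sequence of node blow-ups, identifying the sum with a Tornheim double zeta value.

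For existence, I would proceed by induction on the number of point blow-ups $E(N)=Y_{0}\leftarrow Y_{1}\leftarrow\dots\leftarrow Y_{r}=Y$, with base case Proposition~\ref{prop:2}. The inductive hypothesis on the current model $Z$ is that at each node of the singular locus of the metric, the singular part of $\log\|\theta_{1,1}^{8}\|^{2}$ has the local form $-\tfrac{4}{N}f_{n,m}(u,v)$ plus a pre-log-log form, for a coprime pair $(n,m)$ of positive integers attached to the node. Blowing up any point off the singular locus preserves the extension, since pre-log-log singularities are stable under further blow-ups. Blowing up a singular node with attached pair $(n,m)$ splits its singular factor, by Proposition~\ref{prop:3}(iii), into
\[
-\frac{4}{Nnm(n+m)}\log(t\bar t)-\frac{4}{N}f_{n,n+m}(s,t)
\]
in one chart of the blow-up (and symmetrically into a term with $f_{n+m,m}$ in the other chart). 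The pure logarithm is absorbed into the Cartier divisor of the new extension, while the residual $f$-term is an admissible singularity at a single new node on the exceptional divisor. This propagates the extension, and, reading off the divisor coefficient along the exceptional, yields
\[
\dv_Y(\theta_{1,1}^{8})=\pi^{\ast}\dv_Z(\theta_{1,1}^{8})-\frac{4}{Nnm(n+m)}E
\]
for every blow-up $\pi\colon Y\to Z$ of such a node. The projection formula and $E\cdot E=-1$ then give
\[
(\dv_Y\theta_{1,1}^{8})^{2}-(\dv_Z\theta_{1,1}^{8})^{2}=-\frac{16}{N^{2}(nm(n+m))^{2}},
\]
so the net of self-intersections is monotone decreasing in $\Bir'(E(N))$.

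Each of the $Np_N$ nodes of $D$ on $E(N)$ starts with the pair $(1,1)$, and the branching $(n,m)\mapsto(n,n+m),(n+m,m)$ attaching pairs to the two new nodes produced by a node blow-up is exactly the Stern-Brocot tree rooted at $(1,1)$; every coprime pair $(n,m)$ with $n,m\ge 1$ is therefore visited exactly once. Summing the decrements along a cofinal sequence in which every node is eventually blown up, the total reduction is
\[
Np_N\cdot\frac{16}{N^{2}}\sum_{\substack{n,m\ge 1\\\gcd(n,m)=1}}\frac{1}{(nm(n+m))^{2}}=\frac{16p_N}{N}\cdot S.
\]
Writing $n=dn'$, $m=dm'$ with $\gcd(n',m')=1$ gives $\zeta(6)\cdot S=T(2,2,2):=\sum_{n,m\ge 1}(nm(n+m))^{-2}$, and the identity $T(2,2,2)=\zeta(6)/3$ due to Tornheim~\cite{Tornheim:hds} yields $S=1/3$ and total reduction $\tfrac{16p_N}{3N}$. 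Subtracting from the value $C\cdot C=\tfrac{16(N^{2}+1)p_N}{3N}$ of the preceding proposition gives the self-intersection $\tfrac{16Np_N}{3}$; the net being monotone decreasing and bounded below by this value also proves that $\bdv(\theta_{1,1}^{8})$ is integrable. The main obstacle is the closed-form Tornheim evaluation $T(2,2,2)=\zeta(6)/3$, which we quote; the remaining content is careful bookkeeping of the local log-contributions and the elementary combinatorics of the Stern-Brocot tree.
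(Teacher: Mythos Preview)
Your proof is correct and follows essentially the same approach as the paper's. Both arguments proceed by induction on point blow-ups, use Proposition~\ref{prop:3}(iii) to identify the coefficient $\tfrac{4}{Nnm(n+m)}$ of the exceptional divisor created at each node blow-up, recognize that the resulting node types $(n,m)$ are enumerated by the binary (Stern--Brocot) tree rooted at $(1,1)$, and reduce the total decrement to the Tornheim evaluation $\sum_{\gcd(n,m)=1}(nm(n+m))^{-2}=\zeta(2,2;2)/\zeta(6)=1/3$. The only cosmetic difference is that the paper packages the local singularity via a ``type $(n,m)$ and multiplicity $\mu$'' label rather than writing $-\tfrac{4}{N}f_{n,m}$ directly, but the content is identical.
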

\begin{proof}
  Recall that $\Sigma \subset D$ denotes the set of double points of $D$. By
  Proposition \ref{prop:2}\eqref{item:5}, the restriction $\overline
  L|_{E(N)\setminus 
    \Sigma }$ has a pre-log metric along $D\setminus \Sigma
  $. Therefore, if $p\not \in \Sigma $ and $\pi \colon X\to E(N)$ is
  the blow-up of $E(N)$ at $p$ we deduce that $\pi
  ^{\ast}\mathcal{O}(C)$ is a Mumford-Lear extension of $\overline L$
  and that
  \begin{displaymath}
    \dv_{X}(\theta _{1,1}^{8})=\pi ^{\ast}\dv_{E(N)}(\theta _{1,1}^{8}).
  \end{displaymath}
  
  Assume now that $p\in \Sigma $ and $\pi \colon X\to E(N)$ is
  the blow-up of $E(N)$ at $p$. Write $\Sigma _{X}$ for the set of
  double points of the total transform of $D$. Then $\# \Sigma _{X}=\#
  \Sigma +1$, because we can write $\Sigma _{X}=(\Sigma \setminus
  \{p\})\cup \{p_{1},p_{2}\}$, where $\{p_{1},p_{2}\}$ is the
  intersection of the exceptional divisor $E$ of the blow-up with the
  strict transform of $D$.

  Proposition \ref{prop:3}\eqref{item:3}
  and Proposition \ref{prop:2}\eqref{item:6}
  imply that
  \begin{displaymath}
  (N,\pi ^{\ast}\mathcal{O}(NC)\otimes \mathcal{O}(-2E))  
  \end{displaymath}
  is a Mumford-Lear extension of 
  $\overline L$ to $X$ (in this case the co-dimension two set is
  $\Sigma _{X}$, and the isomorphism and the metric are the ones
  induced by $\alpha $ and $\|\cdot\|$). Moreover 
  \begin{displaymath}
    \dv_{X}(\theta _{1,1}^{8})=\pi ^{\ast}\dv_{E(N)}(\theta _{1,1}^{8})-\frac{2}{N} E.
  \end{displaymath}
  A similar
  phenomenon occurs on any smooth surface birational to $E(N)$. To
  describe it we need a little of terminology. Let $\pi\colon X\to
  E(N)$ be a proper birational map with $X$ smooth. A point $p\in X$
  will be called mild if the metric of $\pi ^{\ast}\overline L$ is
  smooth or pre-log-log in a neighborhood of $p$. Put $\Sigma
  _{X}\subset X$ for the set of non mild points. We will say that a
  point $p$ has type $(n,m)$ and multiplicity $\mu $ if there is a
  coordinate neighborhood centered at $p$, with coordinates $(u,v)$
  such that   
  \begin{displaymath}
    \log\|\theta _{1,1}^{8}\|=
    \log\|s\|'+\varphi-\frac{\mu }{nm}
    \frac{\log(u\bar{u})\log(v\bar{v})}
    {n\log(u\bar{u})+m\log(v\bar{v})}.  
  \end{displaymath}
Observe that $E(N)$ has $Np_{N}$ non-mild points, all of type $(1,1)$
and multiplicity $4/N$. 

  Assume that $\Sigma _{X}$ is finite and that $\overline L$ admits a
  Mumford-Lear extension $(e_{X},\mathcal{O}(C_{X}),\Sigma
  _{X},\|\cdot\|,\alpha )$ to $X$. Let $D_{X}$ be the total transform
  of $D$ to $X$. If $\pi \colon X'\to X$ is the
  blow-up at a mild point $p\not \in \Sigma _{X}$, then $\Sigma
  _{X'}=\pi ^{-1}\Sigma _{X}$ is finite and $(e_{X},\pi
  ^{\ast}\mathcal{O}(C_{X}))$ is a Mumford-Lear extension of
  $\overline L$ to $X'$. In particular,
  \begin{displaymath}
    \dv_{X'}(\theta _{1,1}^{8})={\pi'} ^{\ast}\dv_{X}(\theta _{1,1}^{8}).
  \end{displaymath}
  Let now $\pi \colon X'\to X$ be the
  blow-up of $X$ at a point $p\in \Sigma _{X}$, with type $(n,m)$ and
  multiplicity $a/b$, with $a,b$ integers. Then, by Proposition
  \ref{prop:3}\eqref{item:3}, $\Sigma _{X'}=(\Sigma_{X} \setminus
  \{p\})\cup \{p_{1},p_{2}\}$, where $\{p_{1},p_{2}\}$ is the
  intersection of the exceptional divisor $E_{X'}$ of the blow-up with the
  strict transform of $D_{X}$. Moreover,
  \begin{displaymath}
    (bnm(n+m)e_{X},\pi
  ^{\ast}\mathcal{O}(bnm(n+m)C_{X})\otimes (-a E_{X'}))
  \end{displaymath}
  is a Mumford-Lear extension of $\overline L$ to $X'$. Hence
  \begin{displaymath}
    \dv_{X'}(\theta
    _{1,1}^{8})=\pi ^{\ast}\dv_{X}(\theta _{1,1}^{8})-\frac{a
    }{bnm(n+m)}E_{X'}. 
  \end{displaymath}
  Note also that the singular
  point $p$ gives rise to two points in $\Sigma _{X'}$, both of
  multiplicity $a/b$, one of type $(n+m,m)$ and the other of type
  $(n,n+m)$. Since the self-intersection of the exceptional
  divisor $E_{X'}$ is $-1$, we deduce that
  \begin{equation}
    \label{eq:8}
    \dv_{X'}(\theta
    _{1,1}^{8})^2={\pi'} ^{\ast}\dv_{X}(\theta _{1,1}^{8})^2-\frac{a^{2}
    }{b^{2}n^{2}m^{2}(n+m)^{2}}.
  \end{equation}

 Since the elements of $\Bir'(E(N))$ can be obtained by
  successive blow-ups at points, for all $X\in
  \Bir'(E(N))$, the set $\Sigma _{X}$ is finite and $\overline L$
  admits a Mumford-Lear extensions to $X$. Hence $\overline L$ admits
  all Mumford-Lear extensions over $E(N)$.

  From the previous discussion, it is clear that, to study the b-divisor
  $\bdv(\theta _{1,1}^{8})$, we can forget the blow-ups at mild
  points and concentrate on blow-ups along non-mild points.

  Consider the labeled binary tree with root labeled by $(1,1)$ and
  such that, if a node is labeled $(n,m)$, the two child nodes are
  labeled $(n+m,m)$ and $(n,n+m)$. Then the labels of the tree are in
  bijection with the set of ordered pairs of co-prime positive
  integers. This tree also describes the type of the non mild points that
  appear by successive blow-ups starting with a point of type $(1,1)$.  

  By equation \eqref{eq:8} and this description of the singular points
  that appear in the tower of blow-ups, we deduce that the b-divisor
  $\bdv(\theta _{1,1}^{8})$ is integrable if and only if the series
  \begin{displaymath}
    \sum_{\substack{n>0,\, m>0\\(n,m)=1}}\frac{1}{n^{2}m^{2}(n+m)^{2}}
  \end{displaymath}
  is absolutely convergent. Since this is the case, we conclude that
  the b-divisor
  $\bdv(\theta _{1,1}^{8})$ is integrable. Moreover, since $X(N)$ has
  $p_{N}$ cusps and over each cusp $E(N)$ has $N$ points of type
  $(1,1)$ and multiplicity $4/N$, we deduce from equation \eqref{eq:8}  
  \begin{displaymath}
    \bdv(\theta _{1,1}^{8})^{2}=C\cdot C-\frac{4^{2}Np_{N}}{N^{2}}
    \sum_{\substack{n>0,\,
        m>0\\(n,m)=1}}\frac{1}{n^{2}m^{2}(n+m)^{2}}.
  \end{displaymath}
  Now we compute
  \begin{multline*}
    \sum_{\substack{n>0,\,
        m>0\\(n,m)=1}}\frac{1}{n^{2}m^{2}(n+m)^{2}}=
    \frac{\sum_{n>0,\,
        m>0}\frac{1}{n^{2}m^{2}(n+m)^{2}}}{\sum_{k>0}\frac{1}{k^{6}}}
    \\=\frac{\zeta (2,2;2)}{\zeta (6)}=\frac{\frac{1}{3}\zeta (6)}{\zeta
      (6)}=\frac{1}{3}, 
  \end{multline*}
  where $\zeta (2,2;2)$ is the Tornheim zeta function that is computed in
  \cite{Tornheim:hds}.

  Therefore
  \begin{displaymath}
    \bdv(\theta _{1,1}^{8})^{2}=C\cdot
    C-\frac{16p_{N}}{3N}=\frac{16(N^{2}+1)}{3N}p_{N}-\frac{16}{3N}p_{N}
    =\frac{16Np_{N}}{3}
  \end{displaymath}
  concluding the proof of the theorem.
\end{proof}

\begin{remark}
  We can rewrite the formula in Theorem as
  \begin{displaymath}
    \bdv(\theta _{1,1}^{8})^{2}=4\cdot4\cdot[\mathrm{PSL}_{2}(\mathbb{Z}):\Gamma 
    (N)]\frac{\zeta(2,2;2)}{\zeta(6)}.
  \end{displaymath}
  Thus this degree can be interpreted as the product of the weight of
  the Jacobi form,  
  its index, the index of the subgroup $\Gamma 
    (N)$ in $\mathrm{PSL}_{2}(\mathbb{Z})$ and a multiple zeta value.
\end{remark}


\section{Interpretation and open questions}
\label{sec:interpretation}

In the previous section we have seen that, when taking into account
the invariant metric, the natural way to extend
the Cartier divisor $\dv(\theta _{1,1}^{8})$ associated to the line
bundle of Jacobi forms, from the universal family of elliptic curves
to a compactification of it, is not as a Cartier divisor, but as a
$\QQ$-b-divisor. In particular, this implies that we can not restrict
ourselves to a single toroidal compactification, but we have to consider the
whole tower of toroidal compactifications. Considering purely the
arithmetic definition of Jacobi forms, this fact was already observed
by the third author \cite[Remark 2.19]{Kramer:atJfhd}.

In this section we will give further evidence that
$\bdv(\theta _{1,1}^{8})$ is the natural extension of $\dv(\theta
_{1,1}^{8})$ by showing that if satisfies direct generalizations of
classical theorems on hermitial line bundles. We will also 
state some open problems and future lines of research.

\nnpar{A Hilbert-Samuel formula.}
First we observe that $\bdv(\theta ^{8}_{1,1})^2$ satisfies a
Hilbert-Samuel type formula.

\begin{theorem}\label{thm:2}
  For each $N\ge 3$, the equality
  \begin{displaymath}
    \bdv(\theta ^{8}_{1,1})^2=\lim_{\ell\to \infty}\frac{\dim
      J_{4\ell,4\ell}(\Gamma (N))}{\ell^{2}/2!}
  \end{displaymath}
  holds. 
\end{theorem}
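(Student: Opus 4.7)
The proof is essentially a direct comparison of two quantities that have already been computed in the preceding sections, so my plan is to assemble them and verify that they agree.

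First I would recall that Theorem \ref{thm:1} gives the explicit value of the self-intersection of the b-divisor of $\theta_{1,1}^{8}$:
\begin{displaymath}
\bdv(\theta_{1,1}^{8})\cdot \bdv(\theta_{1,1}^{8}) = \frac{16 N p_N}{3}.
\end{displaymath}
This was obtained by starting from the self-intersection $C\cdot C = \frac{16(N^{2}+1)p_{N}}{3N}$ on $E(N)$ and correcting it by the total contribution from the infinite tower of blow-ups, which evaluated via the Tornheim zeta identity $\zeta(2,2;2)/\zeta(6)=1/3$.

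Next I would recall the asymptotic dimension formula from Remark \ref{rem:1}:
\begin{displaymath}
\dim J_{4\ell,4\ell}(\Gamma(N)) = \frac{8 N p_N}{3}\ell^{2} + o(\ell^{2}),
\end{displaymath}
which itself was derived from the explicit formula for the cuspidal subspace in the cited proposition, together with the observation that the non-cuspidal complement grows at most linearly.

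The theorem then follows by a single algebraic comparison: dividing the dimension formula by $\ell^{2}/2!$ and letting $\ell\to\infty$ gives
\begin{displaymath}
\lim_{\ell\to\infty}\frac{\dim J_{4\ell,4\ell}(\Gamma(N))}{\ell^{2}/2!} = 2\cdot \frac{8 N p_N}{3} = \frac{16 N p_N}{3} = \bdv(\theta_{1,1}^{8})^{2}.
\end{displaymath}

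There is no real obstacle in the proof as stated; the content of the theorem lies entirely in the fact that the two independent computations, one geometric (the b-divisor self-intersection involving a multiple zeta value) and one arithmetic-combinatorial (the dimension of Jacobi forms involving class numbers and Hurwitz numbers), produce matching leading terms. The interest of the statement is conceptual: it confirms that the b-divisor $\bdv(\theta_{1,1}^{8})$ is the ``right'' extension, in the sense that it satisfies a Hilbert-Samuel type identity relating its top self-intersection to the asymptotic growth of global sections, exactly as for ample hermitian line bundles on a smooth projective surface.
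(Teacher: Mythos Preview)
Your proof is correct and follows exactly the same approach as the paper: cite Theorem~\ref{thm:1} for the value $\bdv(\theta_{1,1}^{8})^{2}=\frac{16Np_{N}}{3}$, cite Remark~\ref{rem:1} for the asymptotic $\dim J_{4\ell,4\ell}(\Gamma(N))=\frac{8Np_{N}}{3}\ell^{2}+o(\ell^{2})$, and divide by $\ell^{2}/2!$ to match the two numbers. The additional commentary you provide about the conceptual significance is accurate but not part of the formal argument.
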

\begin{proof}
  By Remark \ref{rem:1} and Theorem \ref{thm:1} we have
  \begin{displaymath}
    \lim_{\ell\to \infty}\frac{\dim
      J_{4\ell,4\ell}\big(\Gamma(N)\big)}{\ell^{2}/2!}=\lim_{\ell\to
      \infty}\frac{\frac{8Np_N}{3}\ell^{2}+o\big(\ell^{2}\big)}{\ell^{2}/2!}=\frac{16Np_N}{3}=\bdv(\theta
    ^{8}_{1,1})^2. 
  \end{displaymath}
\end{proof}

\nnpar{Chern-Weil theory.}
The second task is to show that the self intersection product
in the sense of b-divisors is compatible with Chern-Weil theory. We write
\begin{displaymath}
  c_{1}(L_{4,4,N},\|\cdot\|)=\frac{1}{2\pi i}\partial \bar \partial
  \log\|\theta _{1,1}^{8}\|^2. 
\end{displaymath}

\begin{theorem}\label{thm:3}
  For each $N\ge 3$, the equality
  \begin{displaymath}
    \bdv(\theta ^{8}_{1,1})^2=\int _{E(N)}c_{1}(L_{4,4,N},\|\cdot\|)^{\land 2}
  \end{displaymath}
  holds. 
\end{theorem}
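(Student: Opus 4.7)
The strategy is to verify local integrability of $c_{1}(L_{4,4,N},\|\cdot\|)^{\land 2}$ on $E(N)$ and then reduce the integral, via the tower of blow-ups used in Theorem~\ref{thm:1}, to the identification with $\bdv(\theta_{1,1}^{8})^{2}$. First I would check local integrability. Outside the set $\Sigma$ of double points of $D$, the metric is pre-log by Proposition~\ref{prop:2}\eqref{item:5}, so $c_{1}^{\land 2}$ is locally integrable by the standard estimates for pre-log-log forms (see \cite[Proposition~7.6]{BurgosKramerKuehn:cacg}). Near a point of $\Sigma$, Proposition~\ref{prop:2}\eqref{item:6} writes $\log\|\theta_{1,1}^{8}\|^{2}=\psi-\tfrac{4}{N}f_{1,1}(u_\nu,v_\nu)$ with $\psi$ pre-log-log. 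Expanding $c_{1}^{\land 2}$ yields $(\partial\overline\partial\psi)^{\land 2}$ (integrable), cross terms $\partial\overline\partial\psi\land\partial\overline\partial f_{1,1}$ (integrable by Proposition~\ref{prop:3}\eqref{item:2}), and $(\partial\overline\partial f_{1,1})^{\land 2}$, which vanishes by the same proposition. Hence $c_{1}^{\land 2}$ is locally integrable on $E(N)$.

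Next I would fix the tower $\pi_{k}\colon Y_{k}\to E(N)$ from the proof of Theorem~\ref{thm:1} and use birationality together with local integrability to write $\int_{E(N)}c_{1}^{\land 2}=\int_{Y_{k}}\pi_{k}^{\ast}c_{1}^{\land 2}$. On $Y_{k}\setminus\Sigma_{Y_{k}}$ the pulled-back metric is pre-log-log along a normal crossings divisor, so the Chern-Weil theory of pre-log-log metrics from \cite{BurgosKramerKuehn:cacg} applies: the form $\pi_{k}^{\ast}c_{1}$ represents, in that region, the first Chern class of the Mumford-Lear extension $(e_{Y_{k}},\mathcal{O}(C_{Y_{k}}))$. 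Excising $\epsilon$-balls around the points of $\Sigma_{Y_{k}}$, applying Stokes' theorem and letting $\epsilon\to 0$ using local integrability, one should obtain
\begin{equation*}
\int_{E(N)}c_{1}^{\land 2}=\dv_{Y_{k}}(\theta_{1,1}^{8})^{2}+R_{k},
\end{equation*}
where $R_{k}$ collects the boundary contributions from the singular terms $\partial\overline\partial f_{n,m}$ at the points of $\Sigma_{Y_{k}}$.

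The main obstacle is to show that $R_{k}\to 0$ as we refine the tower. Using the growth estimates \eqref{eq:3} and \eqref{eq:6} together with the blow-up rule of Proposition~\ref{prop:3}\eqref{item:3}, the local contribution to $R_{k}$ at a point of type $(n,m)$ with multiplicity $\mu$ is expected to be of order $O(\mu^{2}/(n^{2}m^{2}(n+m)^{2}))$, in parallel with the self-intersection drop \eqref{eq:8} appearing in the proof of Theorem~\ref{thm:1}. Absolute convergence of $\sum_{(n,m)=1}1/(n^{2}m^{2}(n+m)^{2})=\zeta(2,2;2)/\zeta(6)=1/3$ then forces $R_{k}\to 0$ along the tower. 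Combined with $\dv_{Y_{k}}(\theta_{1,1}^{8})^{2}\to\bdv(\theta_{1,1}^{8})^{2}=16Np_{N}/3$ from Theorem~\ref{thm:1}, this yields the desired equality. The hardest step will be the careful execution of the Stokes/excision argument to produce the stated decomposition with an effective bound on $R_{k}$, that is, adapting the pre-log-log Chern-Weil formalism so as to accommodate the isolated $f_{n,m}$-type singularities present at $\Sigma_{Y_{k}}$.
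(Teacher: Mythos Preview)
Your approach differs from the paper's, and the key estimate you rely on is incorrect.

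The paper does not pass to the tower of blow-ups. It works directly on $E(N)$: writing $\omega=\omega'+\frac{1}{2\pi i}\partial\bar\partial f$ with $\omega'$ the Chern form of a pre-log metric on $\mathcal{O}(C)$ and $f=-\frac{4}{N}f_{1,1}$ near each double point, one has $\int_{E(N)}{\omega'}^{\land 2}=C\cdot C$ by pre-log Chern--Weil, and the difference $\int\omega^{\land 2}-\int{\omega'}^{\land 2}$ reduces via Stokes to the residue of $\frac{1}{(2\pi i)^2}\partial f\land\partial\bar\partial f$ at each of the $Np_N$ double points. An elementary one-variable integral using \eqref{eq:10} evaluates each residue to $-16/(3N^2)$, so $\int c_1^{\land 2}=C\cdot C-16p_N/(3N)$, which is $\bdv(\theta_{1,1}^8)^2$ by Theorem~\ref{thm:1}. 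The identity $-\Res_{(0,0)}\bigl(\frac{1}{(2\pi i)^2}\partial f_{1,1}\land\partial\bar\partial f_{1,1}\bigr)=\tfrac13=\zeta(2,2;2)/\zeta(6)$ is precisely what matches the two computations (Remark~\ref{rem:2}); no limit over the $Y_k$ is taken.

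Your tower strategy could in principle be salvaged, but the bound you propose for the local contributions to $R_k$ fails. A direct computation from \eqref{eq:10} shows that the boundary contribution at a non-mild point of type $(n,m)$ and multiplicity $\mu$ is exactly $-\mu^2/(3n^3m^3)$, not $O\bigl(\mu^2/(n^2m^2(n+m)^2)\bigr)$: for $n=1$ and $m\to\infty$ the former is $\sim m^{-3}$ while your bound asserts $\sim m^{-4}$. What does hold is the identity $\frac{1}{3n^3m^3}=\sum 1/({n'}^{2}{m'}^{2}(n'+m')^{2})$ over the subtree rooted at $(n,m)$, so $R_k$ is minus a tail of the convergent series from Theorem~\ref{thm:1} and hence tends to zero. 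But establishing this requires computing the residue exactly, and once you know it for the type-$(1,1)$ points on $E(N)$ you already have the theorem --- the passage to the tower becomes superfluous.
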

\begin{proof}
  By propositions \ref{prop:3} and \ref{prop:2}, we know that the
  integral in the right hand side exists and is finite. 

  Let $C$ be the divisor on Proposition \ref{prop:2} and choose a
  pre-log metric $\|\cdot\|'$ on $\mathcal{O}(C)$ such that, each double
  point $p_{j,\nu }=\Theta _{j,\nu }\cap \Theta _{j,\nu+1 }\in W^{0}_{j,\nu }$
  has a neighborhood in which
  \begin{equation}\label{eq:9}
    \log\|\theta _{1,1}^{8}\|^{2}=
    \log{\|\theta _{1,1}^{8}\|'}^{2}-\frac{4}{N}
    \frac{\log (u_{\nu }\bar u_{\nu })\log (v_{\nu }\bar v_{\nu })}
    {\log (u_{\nu }\bar u_{\nu })+\log (v_{\nu }\bar v_{\nu })}.
  \end{equation}
  
  Write $\omega =c_{1}(L_{4,4,N},\|\cdot\|)$, $\omega
  '=c_{1}(\mathcal{O}(C),\|\cdot\|')$ and
  \begin{displaymath}
    f=\log\|\theta _{1,1}^{8}\|^{2}-
    \log{\|\theta _{1,1}^{8}\|'}^{2}.
  \end{displaymath}
  Thus
  \begin{displaymath}
    \omega =\omega '+\frac{1}{2\pi i}\partial\bar \partial f.
  \end{displaymath}
  
  Since Chern-Weil theory can be extended to pre-log singularities
  (\cite{Mumford:Hptncc}, \cite{BurgosKramerKuehn:accavb}), the equality
  \begin{displaymath}
    \int_{E(N)}{\omega '}^{\land 2}=C\cdot C
  \end{displaymath}
  holds.
  Since
  \begin{displaymath}
    \int_{E(N)}\omega ^{\land 2}= \int_{E(N)}{\omega '}^{\land 2}-
    \int_{E(N)}\dd (\frac{2}{2\pi i}\partial f\land \omega '+
    \frac{1}{(2\pi i)^{2}}\partial 
    f\land \partial\bar \partial f),
  \end{displaymath}
  we are led to compute the second integral of the right hand side of
  the previous equation. Note that the minus sign in the above formula
  comes from the fact that $\dd \partial=-\partial\bar \partial$.
  Since pre-log-log forms have no residue, in
  order to compute this integral we can focus on the double points
  $p_{j,\nu }$, $j=1,\dots, p_{N}$, $\nu =0,\dots,N-1$ of
  $D$. For each point $p_{j,\nu }$ and $0<\varepsilon <1/e$, let
  $V_{j,\nu ,\varepsilon }$ be the 
  poly-cylinder
  \begin{displaymath}
    V_{j,\nu ,\varepsilon }=\{(u_{\nu },v_{\nu })\in W^{0}_{j,\nu }\mid
    |u_{\nu }|\le \varepsilon ,\ |v_{\nu }|\le \varepsilon\}. 
  \end{displaymath}
  
  Then, by Stokes theorem,
  \begin{multline*}
    -\int_{E(N)}\dd (\frac{2}{2\pi i}\partial f\land \omega '+
    \frac{1}{(2\pi i)^{2}}\partial 
    f\land \partial\bar \partial f)=\\
    \sum_{j=1}^{p_{N}}\sum_{\nu =0}^{N-1}\lim _{\varepsilon \to
      0}\int_{\partial V_{j,\nu ,\varepsilon }}\frac{2}{2\pi i}\partial
    f\land \omega '+ \frac{1}{(2\pi i)^{2}}\partial 
    f\land \partial\bar \partial f.
  \end{multline*}
  Using that $\omega '$ is a pre-log-log form and equation \eqref{eq:4},
  it is easy to see that
  \begin{displaymath}
    \lim_{\varepsilon \to 0} \int_{\partial V_{j,\nu ,\varepsilon
      }}\frac{2}{2\pi i}\partial 
    f\land \omega '=0.
  \end{displaymath}
  
  For shorthand, write $(u,v)$ for the coordinates $(u_{\nu },v_{\nu })$
  of $W^{0}_{j,\nu }$. We decompose $V_{j,\nu
    ,\varepsilon}=A_{\varepsilon }\cup B_{\varepsilon }$, where 
  \begin{align*}
    A_{\varepsilon }&=\{(u,v)\in W^{0}_{j,\nu }\mid
    |u|\le \varepsilon ,\ |v|= \varepsilon\},\\
    B_{\varepsilon }&=\{(u,v)\in W^{0}_{j,\nu }\mid
    |u|= \varepsilon ,\ |v|\le \varepsilon\}.
  \end{align*}
  Using equations \eqref{eq:9} and \eqref{eq:10} and taking care of the
  canonical orientation of a complex manifold, we see that
  \begin{displaymath}
    \int_{A_{\varepsilon }}\frac{1}{(2\pi i)^{2}} \partial
    f\land \partial\bar \partial f = \frac{16}{N^{2}}
    \int_{0}^{\varepsilon }\frac{2(\log(\varepsilon
      ^{2}))^{2}\log(r^{2})2r\dd r}{(\log(r^{2})+\log(\varepsilon
      ^{2}))^{4}r^{2}}=\frac{-16}{6N^{2}}.
  \end{displaymath}
  Similarly
  \begin{displaymath}
    \int_{B_{\varepsilon }}\frac{1}{(2\pi i)^{2}} \partial
    f\land \partial\bar \partial f =\frac{16}{N^{2}}
    \int_{0}^{\varepsilon }\frac{2(\log(\varepsilon
      ^{2}))^{2}\log(r^{2})2r\dd r}{(\log(r^{2})+\log(\varepsilon
      ^{2}))^{4}r^{2}}=\frac{-16}{6N^{2}}.
  \end{displaymath}
  Hence
  \begin{displaymath}
    \lim _{\varepsilon \to
      0}\int_{\partial V_{j,\nu ,\varepsilon }}\frac{2}{2\pi i}\partial
    f\land \omega '+ \frac{1}{(2\pi i)^{2}}\partial 
    f\land \partial\bar \partial f=\frac{-16}{3N^{2}}
  \end{displaymath}
  
  Therefore
  \begin{displaymath}
    \int _{E(N)}c_{1}(L_{4,4,N},\|\cdot\|)^{\land 2}=C\cdot C-\frac{16
      p_{N}}{3N}= \bdv(\theta ^{8}_{1,1})^2.
  \end{displaymath}
\end{proof}

\begin{remark}\label{rem:2}
  Recall the function
  \begin{displaymath}
    f_{1,1}(x,y)=\frac{\log(x\overline x)\log(y\overline y)}
    {\log(x\overline x)+\log(y\overline y)}.
  \end{displaymath}
  The heart of the proof of Theorem \ref{thm:3} is the relation
  \begin{displaymath}
    -\Res_{(0,0)}\Big(\frac{1}{(2\pi i)^{2}}\partial
      f_{1,1}\land \partial\bar \partial
      f_{1,1}\Big)=\frac{1}{3}=\sum_{\substack{n>0,\,
        m>0\\(n,m)=1}}\frac{1}{n^{2}m^{2}(n+m)^{2}}
  \end{displaymath}
  between the residue at $(0,0)$ of the differential form
  $\frac{1}{(2\pi i)^{2}}\partial f_{1,1}\land \partial\bar \partial
  f_{1,1}$ and the harmonic double value $\zeta (2,2;2)/\zeta (6)$. This gives
  us a geometric interpretation of this harmonic double value. 
\end{remark}

\nnpar{Intersections with curves.} Similarly, we also note that the
intersection of $\bdv(\theta _{1,1}^{8})$ with a curve can also be
computed using the differential form $c_{1}(L_{4,4,N},\|\cdot\|)$.

To a curve $C$ contained in $E(N)$, we associate the b-divisor that,
on each $X\in \Bir'(E(N))$ consist on the strict transform of $C$ on
$X$. We will denote this divisor by $\bdv(C)$. Note that this
b-divisor is not integrable because by taking successive blow-ups in
points of $C$, the strict transform of $C$ has self-intersection
more and more negative. Assume that $C$ is irrecucible and
is not contained in $D=E(N)\setminus E^{0}(N)$. Then the product
$\bdv(\theta _{1,1}^{8})\cdot \bdv(C)$ is well defined because after
a finite number of blow-ups on the double points of $D$ and of its total
transforms, the strict transform of $C$ will not meet any double point
of the total transform of $D$.
\begin{theorem}\label{thm:5}
  The equality
  \begin{displaymath}
    \bdv(\theta _{1,1}^{8})\cdot \bdv(C)=
    \int_{C}c_{1}(L_{4,4,N},\|\cdot\|)
  \end{displaymath}
  holds.
\end{theorem}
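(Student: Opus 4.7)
The plan is to reduce the statement to a birational model $Y \to E(N)$ on which the strict transform $\tilde{C}_Y$ of $C$ is disjoint from the set $\Sigma_Y$ of non-mild points, so that the metric of $\pi_Y^{\ast}\overline L$ is pre-log in a neighborhood of $\tilde{C}_Y$ and the classical pre-log Chern--Weil theory of \cite{Mumford:Hptncc}, \cite{BurgosKramerKuehn:accavb}, \cite{BurgosKramerKuehn:cacg} applies directly. The matching of the two integrals on $C$ and on $\tilde{C}_Y$ is then a residue calculation in the spirit of Theorem~\ref{thm:3}.

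First I would produce the good model. Since $C$ is irreducible and not contained in $D$, the intersection $C\cap \Sigma $ is a finite set. For each non-mild point $p\in \Sigma $ through which $C$ passes, blowing up $p$ replaces it with an exceptional $\mathbb{P}^{1}$ on which only two points are non-mild (the intersections with the strict transform of $D$), and the strict transform of $C$ meets the exceptional divisor at a point generically different from these two. Iterating this procedure finitely many times (and, if necessary, resolving the singularities of $C$) we obtain $Y\in\Bir'(E(N))$ with $\tilde{C}_{Y}\cap \Sigma_{Y}=\varnothing$ and $\tilde{C}_{Y}$ smooth. On such $Y$, Proposition \ref{prop:2}\eqref{item:5} (plus the analysis in the proof of Theorem \ref{thm:1}) shows that $\pi_{Y}^{\ast}\overline L$ has a pre-log metric on an open neighborhood of $\tilde{C}_{Y}$.

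Second, I would identify both sides on the model $Y$. By the definition of the pairing $\QCa(\mathfrak{X})\times\QWe(\mathfrak{X})\to\QQ$ (and since $\bdv(C)_{Y}=\tilde{C}_{Y}$ on the model), one has
\begin{equation*}
\bdv(\theta_{1,1}^{8})\cdot \bdv(C)=\dv_{Y}(\theta_{1,1}^{8})\cdot \tilde{C}_{Y}.
\end{equation*}
By the pre-log Chern--Weil formula applied on $Y$ (using that pre-log-log forms have no residue along their logarithmic divisor and that $\tilde{C}_{Y}$ avoids $\Sigma_{Y}$),
\begin{equation*}
\dv_{Y}(\theta_{1,1}^{8})\cdot \tilde{C}_{Y}=\int_{\tilde{C}_{Y}}c_{1}(\pi_{Y}^{\ast}L_{4,4,N},\|\cdot\|).
\end{equation*}

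Third, I would prove the equality $\int_{\tilde{C}_{Y}}c_{1}(\pi_{Y}^{\ast}L_{4,4,N},\|\cdot\|)=\int_{C}c_{1}(L_{4,4,N},\|\cdot\|)$ by descending induction on the height of the tower $Y\to E(N)$, reducing to a single blow-up $\pi\colon X'\to X$ at a non-mild point $p$ of type $(n,m)$ and multiplicity $\mu=a/b$ through which $\tilde{C}_{X}$ passes. Using Proposition \ref{prop:3}\eqref{item:3} we have $\pi^{\ast}f_{n,m}=f_{n,n+m}+\tfrac{1}{nm(n+m)}\log(t\bar t)$, so the pullback form $c_{1}^{X'}$ differs from $\pi^{\ast}c_{1}^{X}$ by a current supported on the exceptional divisor $E$. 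Excising small poly-disks around $p$ in $X$ and around the corresponding points in $X'$ and applying Stokes' theorem (exactly as in the proof of Theorem \ref{thm:3}) I would compute
\begin{equation*}
\int_{\tilde{C}_{X'}}c_{1}^{X'}-\int_{\tilde{C}_{X}}c_{1}^{X}=-\frac{\mu}{nm(n+m)}\bigl(\tilde{C}_{X'}\cdot E\bigr),
\end{equation*}
which exactly matches the change $\dv_{X'}(\theta_{1,1}^{8})\cdot\tilde{C}_{X'}-\dv_{X}(\theta_{1,1}^{8})\cdot\tilde{C}_{X}=-\tfrac{a}{bnm(n+m)}(E\cdot\tilde{C}_{X'})$ coming from the formula in Theorem \ref{thm:1}. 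Combining these, both differences vanish and the integrals agree.

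The main obstacle is the residue calculation in the third step: one must verify that the local contribution at $p$ of the restriction $c_{1}|_{C}$, which involves the singular term $dd^{c}(\mu f_{n,m}|_{C})$ producing a Dirac-type distribution along the parameter on $C$, exactly cancels against the correction appearing in the blow-up formula for $\dv_{X'}(\theta_{1,1}^{8})$. This is a genuinely local computation, but the key input --- the decomposition of $\pi^{\ast}f_{n,m}$ in Proposition \ref{prop:3}\eqref{item:3} --- makes it tractable by the same Stokes' theorem argument already used in the proof of Theorem \ref{thm:3}.
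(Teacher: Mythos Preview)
Your first two steps are exactly the paper's argument: pass to a model $Y$ on which the strict transform $\tilde C_{Y}$ avoids the non-mild locus, identify $\bdv(\theta_{1,1}^{8})\cdot\bdv(C)=\dv_{Y}(\theta_{1,1}^{8})\cdot\tilde C_{Y}$, and then apply pre-log Chern--Weil theory on a neighborhood of $\tilde C_{Y}$ to get $\dv_{Y}(\theta_{1,1}^{8})\cdot\tilde C_{Y}=\int_{\tilde C_{Y}}c_{1}(\pi_{Y}^{\ast}L_{4,4,N},\|\cdot\|)$.

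Your third step, however, is superfluous, and the way you set it up contains a confusion. There is only one first Chern form in play, namely $c_{1}(L_{4,4,N},\|\cdot\|)$, a smooth $(1,1)$-form on the open set $E^{0}(N)$; on every model $X\in\Bir'(E(N))$ the form you call $c_{1}^{X}$ is nothing but its pullback along $\pi_{X}$, and hence $c_{1}^{X'}=\pi^{\ast}c_{1}^{X}$ \emph{identically} as smooth forms on $U_{X'}$. No current supported on $E$ appears, and the displayed identity $\int_{\tilde C_{X'}}c_{1}^{X'}-\int_{\tilde C_{X}}c_{1}^{X}=-\tfrac{\mu}{nm(n+m)}(\tilde C_{X'}\cdot E)$ is not correct: that difference is zero. (The decomposition $\pi^{\ast}f_{n,m}=f_{n,n+m}+\tfrac{1}{nm(n+m)}\log(t\bar t)$ splits the \emph{same} form into two pieces on $X'$; it does not produce a discrepancy between $c_{1}^{X'}$ and $\pi^{\ast}c_{1}^{X}$.)

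The paper simply bypasses this entire step: since $\pi_{Y}|_{\tilde C_{Y}}\colon\tilde C_{Y}\to C$ is a birational morphism of curves and the integrand is a fixed locally integrable form pulled back from $C\cap E^{0}(N)$, the change of variables formula gives $\int_{\tilde C_{Y}}c_{1}(\pi_{Y}^{\ast}L,\|\cdot\|)=\int_{C}c_{1}(L,\|\cdot\|)$ immediately. So your inductive residue matching is not needed, and the ``main obstacle'' you anticipate does not exist.
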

\begin{proof}
  Let $X\to E(N)$ be a birational map obtained by successive blow-ups
  on double points of $D$ and of its total transforms and such that
  the strict transform of $C$ in $X$, that we denote by $C_{X}$, does
  not meet any double point of the total transform of $D$ to $X$. Then
  \begin{displaymath}
    \bdv(\theta _{1,1}^{8})\cdot \bdv(C)=\dv_{X}(\theta
    _{1,1}^{8})\cdot C_{X}. 
  \end{displaymath}
  Let $(e,\mathcal{L},S,\alpha ,\|\cdot\|)$ be a Mumford-Lear
  extension of $\overline L=(L_{4,4,N},\|\cdot\|)$ to $X$. Denote by $s
  =\alpha (\theta ^{8e}_{1,1})$ the rational section of
  $\mathcal{L}$ determined by $\theta _{1,1}^{8}$. Since the metric
  $\|\cdot\|$ is pre-log on $X\setminus S$, we deduce that
  \begin{displaymath}
    \dv_{X}(\theta
    _{1,1}^{8})\cdot C_{X}=\frac{1}{e}\dv(s)\cdot C_{X}=
    \frac{1}{e} \int_{C}c_{1}(\mathcal{L},\|\cdot\|)=
    \int_{C}c_{1}(L_{4,4,N},\|\cdot\|).
  \end{displaymath}
\end{proof}

\nnpar{A toric analogue of the singular metric.}
We now give an interpretation of the harmonic double value
$\zeta (2,2;2)/\zeta (6)$ in 
terms of toric varieties and the volume of a convex surface. 

Consider the projective plane $\PP^{2}$ with projective coordinates
$(x_{0}:x_{1}:x_{2})$ and the canonical line bundle
$\mathcal{O}(1)$. On this line bundle we can put the canonical metric
given by
\begin{displaymath}
  \|x_{0}\|_{\can}=\frac{|x_{0}|}{\max(|x_{0}|,|x_{1}|,|x_{2}|)}.
\end{displaymath}
This metric is continuous.
We have an open immersion $(\CC^{\ast})^{2}\hookrightarrow \PP^{2}$ that
sends the point $(z_{1},z_{2})$ to $(1:z_{1}:z_{2})$. We define the
valuation map $\val\colon (\CC^{\ast})^{2}\to \RR^{2}$ by
\begin{displaymath}
  \val(z_{1},z_{2})=(-\log|z_{1}|,-\log|z_{2}|)
\end{displaymath}
The function $\log(\|x_{0}\|_{\can})$ is constant along the fibers of
$\val$. Thus there exist a function $\Psi _{\can}\colon \RR^{2}\to \RR$
such that
\begin{displaymath}
  \log\|x_{0}(p)\|_{\can}=\Psi _{\can}(\val(p)).
\end{displaymath}
This function is explicitly given by
\begin{displaymath}
  \Psi _{\can}(u,v)=\min(0,u,v).
\end{displaymath}
The projective plane $\PP^{2}$ is a toric variety with the action of
$(\CC^{\ast})^{2}$ given by
\begin{displaymath}
  (\lambda ,\mu )(x_{0}:x_{1}:x_{2})=(x_{0}:\lambda x_{1}:\mu x_{2}). 
\end{displaymath}

The theory of toric varieties tells us that the polytope associated to
$\dv(x_{0})$ is the stability set of $\Psi _{can}$:
\begin{multline*}
  \Delta =\{ x\in (\RR^{2})^{\vee}\mid x(u,v)\ge \Psi _{\can}(u,v),\
  \forall (u,v)\in \RR^{2}\}\\=\conv((0,0),(1,0),(0,1)).
\end{multline*}
Moreover
\begin{displaymath}
  \dv(x_{0})^{2}=2\Vol(\Delta ))=1,
\end{displaymath}
where the volume is computed with respect to the Haar measure that
gives $\ZZ^{2}$ covolume 1.

Now we want to modify the canonical metric to introduce a singularity
of the same type as the singularity of the translation invariant metric on
the line bundle of Jacobi forms at the double points. We define the
metric $\|\cdot\|_{\sing}$ by
\begin{multline*}
  \log\|x_{0}\|_{\sing}=\\
  \begin{cases}
    -\frac{\log(|x_{1}/x_{0}|)\log(|x_{2}/x_{0}|)}
    {\log(|x_{1}/x_{0}|)+\log(|x_{2}/x_{0}|)} &\text{ if }
    |x_{0}|\ge \max(|x_{1}|,|x_{2}|),\\
    -\max(\log(|x_{1}/x_{0}|),\log(|x_{2}/x_{0}|))&
    \text{ otherwise.}
  \end{cases}
\end{multline*}
As before, the function $\log\|x_{0}\|_{\sing}$ is constant along the
fibers of $\val$ and defines a function $\Psi _{\sing}\colon
\RR^{2}\to \RR$ that is given explicitly by
\begin{displaymath}
  \Psi _{\sing}(u,v)=
  \begin{cases}
    \frac{uv}{u+v},&\text{ if }u,v\ge 0,\\
    u, &\text{ if } u\le \min(0,v),\\
    v, &\text{ if } v\le \min(0,u).
  \end{cases}
\end{displaymath}
The function $\Psi _{\sing}$ is conic but is not piecewise
linear. Assume that we can extend 
the theory of toric varieties to toric b-divisors. Then to $\Psi
_{\sing}$ we would associate the convex figure
\begin{multline*}
  \Delta_{\sing} =\{ x\in (\RR^{2})^{\vee}\mid x(u,v)\ge \Psi _{\sing}(u,v),\
  \forall (u,v)\in \RR^{2}\},
\end{multline*}
and we should obtain
\begin{equation}
  \label{eq:11}
  \bdv(x_{0},\|\cdot\|_{\sing})^{2}=2\Vol(\Delta _{\sing}).
\end{equation}
We see that this is indeed the case.

\begin{theorem}\label{thm:4}
  The equation \eqref{eq:11} holds.
\end{theorem}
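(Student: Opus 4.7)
The plan is to compute both sides of \eqref{eq:11} independently and observe they agree. On the analytic side, the same blow-up tower and Tornheim series that power the proof of Theorem \ref{thm:1} compute $\bdv(x_0,\|\cdot\|_{\sing})^{2}$; on the combinatorial side, $\Vol(\Delta_{\sing})$ is obtained by an elementary area computation. The crucial input is the \emph{local} observation that on $\PP^{2}$ the metric $\|\cdot\|_{\sing}$ has the same singularity profile at the torus-fixed point $(1\!:\!0\!:\!0)$ as the translation-invariant metric at a double point of $E(N)$.

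First I would localize the singularities of $\|\cdot\|_{\sing}$. In the affine chart at $(1\!:\!0\!:\!0)$ with coordinates $u=x_1/x_0$, $v=x_2/x_0$, the condition $|u|,|v|\leq 1$ puts us in the case $|x_0|\geq \max(|x_1|,|x_2|)$, so by definition and using $\log|z|=\tfrac{1}{2}\log(z\bar z)$,
\begin{equation*}
\log\|x_0\|_{\sing}^{2}=-\frac{\log(u\bar u)\log(v\bar v)}{\log(u\bar u)+\log(v\bar v)}=-f_{1,1}(u,v).
\end{equation*}
In the language of the proof of Theorem \ref{thm:1}, this is a non-mild point of type $(1,1)$ and multiplicity $1$. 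A parallel computation in the charts centered at $(0\!:\!1\!:\!0)$ and $(0\!:\!0\!:\!1)$ shows that in the appropriate local frame the metric is identically $1$, hence smooth. The metric does exhibit tropical kinks along the real hypersurfaces $|x_i|=|x_j|$ where the max in its definition changes, but these occur away from the complex boundary and can be regularized without touching the $f_{1,1}$ singularity at $(1\!:\!0\!:\!0)$, so the Mumford--Lear formalism applies with a unique non-mild point.

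Next I would copy verbatim the iterated blow-up analysis of Theorem \ref{thm:1}. One starts from $\dv_{\PP^{2}}(x_0)=D_0$, the line $\{x_0=0\}$, of self-intersection $D_0^{2}=1$. Each blow-up at a non-mild point of type $(n,m)$ and multiplicity $1$ produces, by Proposition \ref{prop:3}(iii), two new non-mild points of multiplicity $1$ and types $(n+m,m)$ and $(n,n+m)$, and decreases the self-intersection by $1/(n^{2}m^{2}(n+m)^{2})$. The resulting labels range exactly over the binary tree of coprime positive ordered pairs used in the proof of Theorem \ref{thm:1}, and absolute convergence of $\sum_{(n,m)=1}1/(n^{2}m^{2}(n+m)^{2})$ shows that $\bdv(x_0,\|\cdot\|_{\sing})$ is integrable; using the Tornheim evaluation $\zeta(2,2;2)/\zeta(6)=1/3$ as there,
\begin{equation*}
\bdv(x_0,\|\cdot\|_{\sing})^{2}=D_0^{2}-\!\!\sum_{\substack{n,m>0\\(n,m)=1}}\!\!\frac{1}{n^{2}m^{2}(n+m)^{2}}=1-\tfrac{1}{3}=\tfrac{2}{3}.
\end{equation*}

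Finally I would compute $\Vol(\Delta_{\sing})$ and compare. The stability inequality $au+bv\geq uv/(u+v)$ for $u,v\geq 0$, parametrized by $t=v/u$, reads $a+bt\geq t/(1+t)$, so $(a,b)$ must lie above every tangent line to the concave curve $g(t)=t/(1+t)$; the envelope of tangents is parametrized by $(s^{2},(1-s)^{2})$, $s\in[0,1]$, giving the boundary arc $\sqrt{a}+\sqrt{b}=1$. The remaining inequalities (from $u\leq\min(0,v)$ and its symmetric version) force $a,b\geq 0$ and $a+b\leq 1$, so $\Delta_{\sing}$ is the region of the triangle $\{a,b\geq 0,\ a+b\leq 1\}$ lying above this arc. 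Its area equals
\begin{equation*}
\tfrac{1}{2}-\int_{0}^{1}(1-\sqrt{a})^{2}\,\mathrm{d}a=\tfrac{1}{2}-\tfrac{1}{6}=\tfrac{1}{3},
\end{equation*}
so $2\Vol(\Delta_{\sing})=\tfrac{2}{3}=\bdv(x_0,\|\cdot\|_{\sing})^{2}$, which is \eqref{eq:11}. The principal obstacle is the opening step: rigorously defining the b-divisor for the merely piecewise-smooth toric metric $\|\cdot\|_{\sing}$ and checking that the tropical kinks contribute nothing to the self-intersection; once that is absorbed, the proof is a transcription of Theorem \ref{thm:1}'s argument together with a short one-variable integral.
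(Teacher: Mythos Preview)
Your proposal is correct and follows essentially the same approach as the paper: compute $\bdv(x_{0},\|\cdot\|_{\sing})^{2}$ via the blow-up tower and Tornheim series exactly as in Theorem~\ref{thm:1}, compute $\Vol(\Delta_{\sing})$ directly from the explicit description $\{\sqrt{a}+\sqrt{b}\ge 1\}$, and compare. You supply a bit more detail than the paper (the envelope computation yielding the boundary arc, the check at the other torus-fixed points, and the caveat about the tropical kinks), but the structure of the argument is identical.
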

\begin{proof}
  Arguing as in the proof of Theorem \ref{thm:1}, we see that
  \begin{displaymath}
    \dv(x_{0})^{2}-\bdv(x_{0},\|\cdot\|_{\sing})^{2}=\sum_{\substack{n>0,\,
        m>0\\(n,m)=1}}\frac{1}{n^{2}m^{2}(n+m)^{2}}=\frac{1}{3}.
  \end{displaymath}
  The stability set $\Delta _{\sing}$ can be explicitly computed, and
  is given by
  \begin{displaymath}
    \Delta _{\sing}=\{(x,y)\in (\RR^{2})^{\vee}\mid x,y\ge 0,\,x+y\le
    1, \sqrt{x}+\sqrt{y}\ge 1.\}
  \end{displaymath}
  Thus
  \begin{displaymath}
    2\Vol(\Delta )-2\Vol(\Delta _{\sing})=
    2\int_{0}^{1} (1-\sqrt{x})^{2}\dd x=\frac{1}{3}.
  \end{displaymath}
\end{proof}
\begin{remark}\label{rem:3} In fact, since in the theory of toric
  varieties, the blow-ups have a explicit description in terms of
  fans, it is possible to interpret the equation $\zeta (2,2;2)=1/3\zeta (6)$
  is terms of an infinite triangulation of $\Delta \setminus \Delta
  _{\sing}$. 
\end{remark}

\nnpar{Open questions.} In this paper we have examined a particular
example and observed, just by comparing numbers, that several
classical results
should be extendable to b-divisors and singular metrics with a
shape similar to the one of the translation invariant metrics. We are
in the process of investigating the following questions.

\begin{enumerate}
\item Theorem \ref{thm:2} shows that the translation invariant metric
  encodes the asymptotic behavior of the space of Jacobi forms. It is
  possible to define global sections of a b-divisor. We can ask what
  is the exact relationship between the space of Jacobi forms and the
  global sections of the b-divisor $\dv(\theta _{1,1}^{8})$. Moreover,
  once this is settled, we can ask whether there is a Riemann-Roch
  theorem or a Hilbert-Samuel theorem for b-divisors that imply
  directly Theorem \ref{thm:2}.
\item By Theorem \ref{thm:4}, it is clear that much of the theory of
  toric varieties could be extended to toric b-divisors and singular
  metrics on toric varieties.
\item Theorem \ref{thm:3} shows that Chern-Weil theory of singular
  metrics can be useful to study b-divisors. It would be
  interesting to generalize this theorem to higher dimensions. In this
  direction, with R. de Jong and
  D. Holmes, we have shown that the local integrability property
  extends, at least, to the case of toroidal compactifications of families of
  abelian varieties. 
\item The original motivation of this paper is to be able to define
  and study the height of cycles on the universal elliptic curve with
  respect to
  the bundle of Jacobi forms equipped with the translation invariant
  metric, extending the work in \cite{Kramer:atsjf}. First it 
  is clear how to define the height of an algebraic point of $E^{0}(N)$
  and one may wonder whether the new singularities are mild enough so
  that Northcott property is still true. We can
  also define the height of an algebraic curve not contained in the
  divisor $D$. But it is not clear how to define the height of
  $E(N)$. The naive definition of that height would give the value
  $-\infty$
  but it should be possible to extract a meaningful finite number. To
  this end, the study of toric varieties might be useful, because the
  techniques developed in \cite{BurgosPhilipponSombra:agtvmmh} can be
  extended to the singular metrics of Theorem \ref{thm:4}. In this
  case, we obtain that the stability set of the function associated to
  the metric is no longer a polytope but a convex set. In this case
  the regularized height should be defined from the integral along this
  convex set of the roof function, in analogy with \cite[Theorem
  5.2.5]{BurgosPhilipponSombra:agtvmmh}. 
\end{enumerate}


\bibliographystyle{amsplain}
\newcommand{\noopsort}[1]{} \newcommand{\printfirst}[2]{#1}
  \newcommand{\singleletter}[1]{#1} \newcommand{\switchargs}[2]{#2#1}
  \def\cprime{$'$}
\providecommand{\bysame}{\leavevmode\hbox to3em{\hrulefill}\thinspace}
\providecommand{\MR}{\relax\ifhmode\unskip\space\fi MR }
\providecommand{\MRhref}[2]{%
  \href{http://www.ams.org/mathscinet-getitem?mr=#1}{#2}
}
\providecommand{\href}[2]{#2}


\end{document}